\numberwithin{equation}{section}
\newtheorem{prop}{Proposition}
\newtheorem{theo}[prop]{Theorem}
\newtheorem{lemm}[prop]{Lemma}
\newtheorem{rema}[prop]{Remark}
\newtheorem{defi}[prop]{Definition}
\theoremstyle{definition}
\newtheorem*{ack}{Acknowledgment}
\theoremstyle{remark}
\newcommand{\p}{\partial}
\begin{document}
\title{The rigidity of hypersurfaces in Euclidean space}

\author{Chunhe Li}
\address{School of Mathematical Sciences  \\ University of Electronic Science and Technology of China \\ Chengdu, China} \email{chli@fudan.edu.cn}
\author{Yanyan Xu}
\address{School of Mathematical Sciences\\ University of Electronic Science and Technology of China \\ Chengdu, China}
\email{XY1101UESTC@163.com}
\thanks{Research of the first author is partially supported  by NSFC Grant No.11571063.}
\begin{abstract}
In the present paper, we revisit the  rigidity of hypersurfaces in Euclidean space. We highlight Darboux equation and give new proof of rigidity of hypersurfaces by energy method and maximal principle.
\end{abstract}

\subjclass{53C24,53C45.}

\maketitle

\section{introduction}

The isometric embedding problem is one of the fundamental problems in differential geometry. Since Riemannian manifold was formulated by Riemmann in 1868,
naturally there arose the question of whether an abstract Riemmannian manifold
is simply a submanifold of some Euclidean space with its induced metric. In other
words, it's the question of reality of Riemannian manifold. (see more details in an expository note \cite{HH}.)

Mathematically, the isometric embedding problem is to solve the following system. For any given Riemannian manifold $(\mathcal{M},g)$, there is a surface $\vec{r}:\mathcal{M}\mapsto \mathbb{R}^{n+1}$ such that
\begin{eqnarray}\label{1.1}
d\vec{r}\cdot d\vec{r}=g,
\end{eqnarray}
where $\cdot$ denotes Euclidean inner product.
In the present paper we assume the $\vec{r}$ is a hypersurface, i.e. $\mathcal{M}$ is a manifold of $n$ dimension.

As known the uniqueness of solution in PDEs is related to the existence, hence it's another important topic. The counterpart of uniqueness in isometric embedding is global rigidity. The rigidity is to characterize isometric deformation of surfaces which is closely
related the global isometric embedding of surfaces.
\begin{defi}
An immersed surface $\vec{r}:\mathcal{M}\rightarrow \mathbb{R}^{3}$ is rigid if every immersion $\tilde{r}:\mathcal{M}\rightarrow \mathbb{R}^{3}$, with the same induced metric, is congruent to $\vec{r}$, that is, differs from $\vec{r}$ by an isometry of $\mathbb{R}^3.$
\end{defi}

If $\vec{r},\tilde{r}$ differs from  by an isometry of $\mathbb{R}^3,$ they are isometric naturally. Global rigidity says there is no other $\tilde{r}$ which is isometric to $\vec{r}$ except such trivial $\tilde{r}$ congruent to $\vec{r}$, hence global rigidity can be viewed as the uniqueness of the solution to isometric embedding problem.

The linearized version of global rigidity is infinitesimal rigidity. We say that $\vec{r}_t$ yields a first order isometric deformation of $\vec{r}=\vec{r}_0$ if the induced metric $g_t=d\vec{r}_t\cdot d\vec{r}_t$ has a critical point at $t=0,$
\begin{eqnarray}
\frac{d}{dt}(d\vec{r}_t\cdot d\vec{r}_t)=0,\text{ at } t=0.\nonumber
\end{eqnarray}
Set $\vec{\tau}=\frac{d r_t}{dt}$ at $t=0$, then the infinitesimal problem becomes
\begin{eqnarray}\label{1.2}
d\vec{r}\cdot d\vec{\tau}=0.
\end{eqnarray}

As known, the isometry group of $\mathbb{R}^{n+1}$ is orthogonal group $O(n+1)$ and translation \cite{K}, namely affine group. Hence
the $\vec{\tau}=A\vec{r}+\vec{b}$ generated by its Lie algebra is always the solution to homogeneous linearized equation,
where $A\in o(n+1)$ is a skew matrix and $\vec{b}$ is a constant vector. Such $\vec{\tau}$ is called a trivial solution to \eqref{1.2}.
For $n=2,$ it's equivalent to $\vec{\tau}=\vec{a}\times r+\vec{b}$ for any constant $\vec{a}$ and $\vec{b}$.
\begin{defi}
The surface is infinitesimally rigid if \eqref{1.2} has only trivial solutions.
\end{defi}
In the present paper we will revisit several kinds of rigid surfaces and give new proof which is based on
the equivalence of isometric embedding equation \eqref{1.1}, Gauss-Codazzi equations and Darboux equation.

For the case of $n=2,$ Cohn-Voseen and Blaschke \cite{CV,B} proved
\begin{theo}\label{CV}
Let $\mathcal{M}$ be a smooth closed surface with nonnegative curvature and let the vanishing set of the curvature has no interior points. Then $\mathcal{M}$
is globally rigid.
\end{theo}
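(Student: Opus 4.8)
The plan is to reduce the claim to matching the two embeddings up to a rigid motion, and to use the Darboux equation together with the maximum principle and an energy identity on the closed surface. First I would record the geometric normalization: since $\mathcal{M}$ is closed with $K\ge 0$ and the zero set of $K$ has empty interior, $\mathcal{M}$ bounds a convex body and is an ovaloid (diffeomorphic to $S^2$); because $K$ is intrinsic, any competitor $\tilde r$ with the same induced metric $g$ is an ovaloid as well, and both second fundamental forms $h,\tilde h$ are positive definite after fixing the inner normal. By the fundamental theorem of surface theory it then suffices to show $h\equiv\tilde h$, or equivalently that $\vec r$ and $\tilde r$ differ by a translation.

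Next I would exploit the Darboux equation. Fix a unit vector $e\in\mathbb{R}^3$ and set $z=\langle \vec r,e\rangle$. From the Gauss formula one gets $\nabla^2 z=\langle \nu,e\rangle\, h$, and splitting $e$ into its tangential and normal parts gives $\langle\nu,e\rangle^2=1-|\nabla z|^2$; hence
\begin{eqnarray}
\det(\nabla^2 z)=K\det(g)\,(1-|\nabla z|^2). \nonumber
\end{eqnarray}
This is the Darboux equation, and its coefficients are intrinsic: the only non-metric quantity appearing is $z$ itself. Consequently $\tilde z=\langle \tilde r,e\rangle$ solves the \emph{same} equation, so the difference $w=z-\tilde z$ satisfies a linear, homogeneous, second order equation $a^{ij}\nabla_i\nabla_j w+c^{\,j}\nabla_j w=0$ with no zeroth order term, where $a^{ij}$ is the mixed cofactor of $\nabla^2 z$ and $\nabla^2\tilde z$ and $c^{\,j}$ comes from the gradient term.

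If $a^{ij}$ were of one sign, the strong maximum principle on the closed manifold $\mathcal{M}$ would immediately force $w$ to be constant. The main obstacle is precisely that this fails: $\mathcal{M}$ is not a global graph over the plane $e^{\perp}$, so $\langle\nu,e\rangle$ changes sign, $\nabla^2 z$ is positive on the cap where $\nu\cdot e>0$ and negative on the opposite cap, and the operator degenerates along the curve $\{\langle\nu,e\rangle=0\}=\{|\nabla z|=1\}$. To get around this I would pass to the energy method: multiply the equation for $w$ by a suitable multiplier (either $w$ itself, or a combination dictated by the Codazzi equations satisfied by $h$ and $\tilde h$ with respect to the common connection of $g$) and integrate over $\mathcal{M}$. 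Since $\mathcal{M}$ has no boundary, integration by parts produces no boundary terms, and I expect the Codazzi structure to convert the sign-indefinite pointwise operator into a globally signed integral (a Herglotz-type identity), forcing $\nabla w\equiv 0$ and hence $w\equiv\mathrm{const}$. Carrying this out for three independent directions $e$ yields $\vec r-\tilde r=\mathrm{const}$, a translation, which gives rigidity. The step I expect to be hardest is arranging the integration by parts so that the contributions from the two caps combine with a definite sign across the degenerate equator; this is where the convexity and the interior-point hypothesis on the zero set of $K$ are exactly what keep the argument from collapsing.
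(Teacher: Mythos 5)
Your proposal fails at a structural level before the analytic difficulties even begin: you assert that $h\equiv\tilde h$ is ``equivalent'' to $\vec r-\tilde r$ being constant, and your whole scheme then aims at the latter. That statement is false until the rotational freedom has been fixed. Indeed, if $\tilde r=A\vec r$ with $A\in SO(3)$ a nontrivial rotation, then $\tilde r$ is isometric to $\vec r$, and the height function $\tilde z=\tilde r\cdot e=\vec r\cdot(A^{T}e)$ solves the very same Darboux equation $\det(\nabla^2 z)=K\det(g)\,(1-|\nabla z|^2)$; hence $w=z-\tilde z=\vec r\cdot(e-A^{T}e)$ is a \emph{nonconstant} solution of your linear homogeneous equation whenever $e$ is not on the rotation axis. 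So the kernel of your linearized operator genuinely contains nonconstant functions, and no maximum principle or energy identity can force $w\equiv\mathrm{const}$: the approach as structured cannot close. The quantity one must prove to vanish is the congruence-invariant one, $W_{ij}=\tilde h_{ij}-h_{ij}$, which is exactly what the paper does.

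The second gap is the one you flag yourself but leave as a hope (``I expect the Codazzi structure to convert\ldots''), and it is tied to the same wrong choice of function. Your coefficient is, up to a factor, $(\nu\cdot e)$ times the cofactor matrix of $h$ plus $(\tilde\nu\cdot e)$ times the cofactor matrix of $\tilde h$; not only do both weights $\nu\cdot e$ and $\tilde\nu\cdot e$ change sign, but the two degenerate curves $\{\nu\cdot e=0\}$ and $\{\tilde\nu\cdot e=0\}$ are in general \emph{different} curves on $\mathcal M$, so there is no common cap decomposition across which signs could be matched. The paper's resolution is to replace the height function by $\rho=\tfrac12|\vec r|^2$: then $\rho_{i,j}-g_{ij}=\mu h_{ij}$, where the weight $\mu=\vec r\cdot\vec n$ is the support function, which is positive \emph{everywhere} after translating each (convex) surface so that the origin is interior --- this is the global positivity that $\nu\cdot e$ can never have on a closed surface. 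Setting $\Phi=\tilde\rho-\rho$ and $\bar h=h+\tilde h$, one gets $W_{ij}(\mu+\tilde\mu)=2\Phi_{i,j}+\bar h_{ij}(\mu-\tilde\mu)$, together with $\bar h^{ij}W_{ij}=0$ (from equality of the two determinants $\det\tilde h=\det h=K|g|$) and the Codazzi relation $W_{ij,k}=W_{ik,j}$. The energy
\begin{equation*}
(W,W)=\int_{\mathcal M}\frac{\det\bar h}{\det g}\,\bar h^{ij}\bar h^{kl}W_{ik}W_{jl}\,(\mu+\tilde\mu)\,dV_g\ \ge 0
\end{equation*}
is then shown to vanish: substituting the identity above, the trace-free relation kills the $(\mu-\tilde\mu)$ term, and the remaining term integrates to zero on the closed surface because $(\det(\bar h)\,\bar h^{ij}\bar h^{kl}W_{jl})_{,k}=0$, a divergence-free structure coming from Codazzi plus trace-freeness. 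That positive weight and that divergence identity are precisely the two ingredients missing from your outline.
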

\begin{theo}\label{CB}
Let $\mathcal{M}$ be a smooth closed surface with nonnegative curvature and let the vanishing set of the curvature has no interior points. Then $\mathcal{M}$
is infinitesimally rigid.
\end{theo}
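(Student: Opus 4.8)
The plan is to recast the infinitesimal rigidity of \eqref{1.2} as a uniqueness statement for a single scalar elliptic equation — the linearization of the Darboux equation — and then to pin down its kernel by an energy identity together with the maximum principle, exploiting the sign condition $K\ge 0$ and the density of $\{K>0\}$. As a preliminary reduction I would record the topology: since $K\ge 0$ and $\{K=0\}$ has empty interior, $\int_{\mathcal M}K\,dA>0$, so Gauss--Bonnet gives $\chi(\mathcal M)=2$; thus $\mathcal M$ is diffeomorphic to the sphere and in particular simply connected. This is what lets me pass freely between local and global data and, crucially, propagate a local conclusion across the nowhere dense degeneracy locus $\{K=0\}$.

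Next I would set up the scalar equation. Fix a unit direction $\vec e\in S^{2}$ and put $u=\vec r\cdot\vec e$. Using the Gauss formula $\vec r_{;ij}=h_{ij}\vec n$ one checks that $u$ solves the Darboux (Monge--Amp\`ere) equation
\[
\det(\nabla^2 u)=K\det(g)\,\bigl(1-|\nabla u|_g^2\bigr),
\]
where $\nabla^2u$ and $\nabla$ are taken with respect to $g$. An infinitesimal deformation satisfying \eqref{1.2} preserves $g$, hence $K$ and the Levi--Civita connection, to first order; therefore the variation $\dot u=\vec\tau\cdot\vec e$ solves the linearization
\[
L\dot u:=\operatorname{cof}(\nabla^2u)^{ij}\,\nabla^2_{ij}\dot u+2K\det(g)\,\langle\nabla u,\nabla\dot u\rangle_g=0 .
\]
Since $\det(\nabla^2u)=K\det(g)(1-|\nabla u|_g^2)\ge 0$, the matrix $\operatorname{cof}(\nabla^2u)$ is positive semidefinite, so $L$ is a (possibly degenerate) elliptic operator with no zeroth order term, strictly elliptic exactly where $K>0$ and $\vec n\not\perp\vec e$. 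I would also note which $\dot u$ come from trivial fields: for $\vec\tau=\vec a\times\vec r+\vec b$ the genuine motion $\vec r_t=R_t\vec r+t\vec b$ gives $u_t=\vec r\cdot(R_t^{\mathsf T}\vec e)+t\,\vec b\cdot\vec e$, again a Darboux solution since $R_t^{\mathsf T}\vec e\in S^2$; differentiating shows the corresponding $\dot u=\vec r\cdot\vec c+d$ with $\vec c\perp\vec e$. These form a $3$-dimensional subspace of $\ker L$, and the theorem amounts to showing $\ker L$ is exactly this subspace.

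The analytic heart is to rule out further kernel elements. Here I would use the energy method: by the Codazzi equations (equivalently, the null--Lagrangian identity for the cofactor of a Hessian, corrected by the surface curvature) the operator $L$ admits a divergence structure, and integrating an appropriate multiple of $\dot u\,L\dot u=0$ over the closed surface $\mathcal M$ against the semidefinite form $\operatorname{cof}(\nabla^2u)$ yields a nonnegative integrand whose vanishing forces $\dot u$ into the kernel described above on $\{K>0\}$, i.e. $\dot u=\vec r\cdot\vec c(\vec e)+d(\vec e)$ with $\vec c(\vec e)\perp\vec e$. By continuity this persists on all of $\mathcal M$. Finally I reassemble $\vec\tau$: running this for all $\vec e$, linearity of $\dot u=\vec\tau\cdot\vec e$ in $\vec e$ (together with the fact that $\mathcal M$ does not lie in a plane) gives $\vec c(\vec e)=M\vec e$, $d(\vec e)=\vec b\cdot\vec e$, hence $\vec\tau=A\vec r+\vec b$ with $A=M^{\mathsf T}$; the constraint $\vec e\cdot M\vec e\equiv0$ makes $A$ skew, so $\vec\tau=\vec a\times\vec r+\vec b$ is trivial.

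The main obstacle I anticipate is precisely the degeneracy at $\{K=0\}$: where $K$ vanishes $L$ loses ellipticity and the energy form degenerates, so the argument only controls $\dot u$ on the open set $\{K>0\}$. Converting this into information on all of $\mathcal M$ is exactly where the hypothesis that $\{K=0\}$ has empty interior is indispensable — it lets me combine density with the maximum principle and unique continuation for $L$ on the connected region $\{K>0\}$ to exclude a nontrivial bending concentrated along the flat locus. Making the divergence structure of $L$ precise on a curved surface, and controlling the first order term $2K\det(g)\langle\nabla u,\nabla\dot u\rangle_g$ in the energy integral, is the other delicate point; this is where the equivalence between \eqref{1.1}, the Gauss--Codazzi equations, and the Darboux equation does the real work.
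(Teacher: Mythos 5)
Your overall strategy --- linearize a Darboux-type equation and conclude by an energy identity plus the maximum principle --- is exactly the philosophy of the paper, but the scalar function you chose breaks the argument at its central point. The step ``since $\det(\nabla^2u)=K\det(g)\bigl(1-|\nabla u|_g^2\bigr)\ge 0$, the matrix $\operatorname{cof}(\nabla^2u)$ is positive semidefinite'' is false on a closed surface. By the Gauss formula, $\nabla^2_{ij}u=h_{ij}\,(\vec{n}\cdot\vec{e})$, and in two dimensions the cofactor is \emph{linear} in the matrix, so $\operatorname{cof}(\nabla^2u)=(\vec{n}\cdot\vec{e})\operatorname{cof}(h)$: it is positive semidefinite where $\vec{n}\cdot\vec{e}>0$ and \emph{negative} semidefinite where $\vec{n}\cdot\vec{e}<0$. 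Since the height function $u=\vec{r}\cdot\vec{e}$ attains a maximum and a minimum on the closed surface, $\vec{n}\cdot\vec{e}$ necessarily changes sign, so your energy integrand has no global sign, and the ellipticity of $L$ reverses orientation across the ``equator'' $\{\vec{n}\cdot\vec{e}=0\}$ --- a degeneracy that has nothing to do with the hypothesis on $\{K=0\}$ and is not cured by it. There is also a structural reason the identity you sketch cannot exist in the stated form: an identity $0=\int(\text{nonnegative quantity quadratic in }\nabla\dot u)$ would force $\nabla\dot u\equiv 0$ on $\{K>0\}$, yet the trivial fields $\dot u=\vec{r}\cdot\vec{c}+d$ lie in $\ker L$ and have nonvanishing gradient; so a correct energy must be built from a quantity that vanishes exactly on trivial solutions, not from $\nabla\dot u$. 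Finally, ``unique continuation for $L$'' across the degenerate sets is asserted, not proved, and is not available for such degenerate operators; this is a real gap, not a routine detail.

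The paper's proof shows how to repair all of this at once: replace the height function by $\rho=\tfrac12\vec{r}\cdot\vec{r}$ (which also satisfies a Darboux equation, via $\rho_{i,j}-g_{ij}=\mu h_{ij}$) and use its linearization $\varphi=\vec{r}\cdot\vec{\tau}$. The second-order quantity is then the tensor $w_{ij}=\varphi_{i,j}/\mu+h_{ij}\nu/\mu^2$ of (2.26), which satisfies the linearized Gauss--Codazzi system (2.21)--(2.22), i.e. $h^{ij}w_{ij}=0$ and $w_{ij,k}=w_{ik,j}$, and which vanishes if and only if $\vec{\tau}$ is trivial (here your Gauss--Bonnet remark that $\mathcal{M}\cong\mathbb{S}^2$ is simply connected is indeed needed and correct). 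The weight in the energy is now the support function $\mu=\vec{r}\cdot\vec{n}$, which after a translation is positive \emph{everywhere} on the convex surface --- precisely what $\vec{n}\cdot\vec{e}$ fails to be --- so that
\begin{equation*}
(w,w)=\int_{\mathcal{M}}\frac{\det(h)}{\det(g)}\,h^{ij}h^{kl}w_{ik}w_{jl}\,\mu\,dV_g\ \ge\ 0 .
\end{equation*}
Substituting (2.26), the trace condition $h^{ij}w_{ij}=0$ kills the zeroth-order term, the Codazzi property gives the divergence-free identity $\bigl(\det(h)h^{ij}h^{kl}w_{jl}\bigr)_{,k}=0$ (the curvature-corrected null-Lagrangian fact you allude to, here made exact), and integration by parts on the closed surface yields $(w,w)=0$. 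Since $\det(h)\,\mu>0$ on the dense open set $\{K>0\}$, this forces $w=0$ there, hence everywhere by continuity of the tensor $w$ --- no unique continuation across $\{K=0\}$ is needed. If you want to keep your maximum-principle flavor, the paper's second proof does that too, but again with a globally positive semidefinite coefficient matrix ($h^{ij}$, not $\operatorname{cof}(\nabla^2u)^{ij}$), applied to a potential $\psi$ of the closed form $d\vec{Y}\cdot\vec{E}$.
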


Another rigid surface is Alexandrov's annuli \cite{A}.
\begin{defi}
The $2$ dimensional multiply-connected Riemannian manifold  $(\mathcal{M},g)$ satisfies Alexandrov's assumption:
\begin{eqnarray}
&&K>0, \text{ in }\mathcal{M} ,\\
&& \int_{\mathcal{M}}K dg=4\pi,\nonumber\\
&& K=0, \nabla K \neq 0 \text{ on }\partial \mathcal{M}.\nonumber
\end{eqnarray}

If $\vec{r}$ is the isometric embedding of $(\mathcal{M},g)$ in $\mathbb{R}^3,$
we call $\vec{r}$  Alexandrov's annuli.
\end{defi}
The following rigidity theorems are due to Alexandrov \cite{A} and Yau \cite{Y} respectively,
\begin{theo}\label{A}
 Alexandrov's annuli $\vec{r}$ is globally rigid.
\end{theo}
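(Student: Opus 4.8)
The plan is to take two isometric immersions $\vec r,\tilde r:\mathcal M\to\mathbb R^{3}$ inducing the common metric $g$ and to show that they share the same second fundamental form; the fundamental theorem of surface theory then produces the rigid motion of $\mathbb R^{3}$ taking $\vec r$ to $\tilde r$, which is exactly global rigidity. Both immersions solve \eqref{1.1}, so by the equivalence of the embedding system with the Gauss--Codazzi and Darboux equations their second fundamental forms $b_{ij}$ and $\tilde b_{ij}$ satisfy the same Gauss equation $\det(g^{-1}b)=K=\det(g^{-1}\tilde b)$ and the Codazzi equations for the single Levi-Civita connection of $g$. The curvature $K$ and the metric $g$ are therefore common data, and the whole question reduces to the uniqueness of a symmetric tensor with prescribed determinant $K$ and prescribed Codazzi structure. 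The total curvature hypothesis $\int_{\mathcal M}K\,dg=4\pi$ records that the Gauss image sweeps all of $S^{2}$, so that this Darboux/support-function formulation is globally consistent rather than merely local.

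The first step is to set $a_{ij}=b_{ij}-\tilde b_{ij}$ and to extract its two defining relations. Since the determinant is a quadratic form on $2\times2$ symmetric matrices, the exact identity $\det P-\det Q=\bigl\langle\operatorname{cof}\!\bigl(\tfrac12(P+Q)\bigr),\,P-Q\bigr\rangle$ applied to $b,\tilde b$ gives the pointwise linear constraint $\bigl\langle\operatorname{cof}\!\bigl(\tfrac12(b+\tilde b)\bigr),a\bigr\rangle=0$, confining $a$ to a codimension-one subspace at every point. Because $b$ and $\tilde b$ are Codazzi tensors for the same connection, their difference is again Codazzi, $\nabla_k a_{ij}=\nabla_j a_{ik}$. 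Following the Herglotz--Blaschke tradition I would then manufacture from $a$ (together with the position vector and the normal) a one-form $W$ whose divergence is a quadratic form in $a$ weighted by the cofactor of the second fundamental form. Integrating this divergence over $\mathcal M$ is the energy method: it yields
\[
\int_{\mathcal M} Q\bigl(\operatorname{cof} b;\,a\bigr)\,dg=\int_{\partial\mathcal M}\langle W,\nu\rangle\,ds,
\]
where the bulk integrand $Q$ is a quadratic form in $a$ that is nonnegative wherever $K>0$ and vanishes there only if $a=0$, the positivity coming from $\operatorname{cof} b$ being positive definite together with the pointwise constraint that pins $a$ into the favourable subspace.

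The hard part will be the boundary $\partial\mathcal M$, where $K=0$ makes the weight $\operatorname{cof} b$ drop rank and the quadratic form $Q$ degenerate. Here the hypothesis $\nabla K\neq0$ on $\partial\mathcal M$ is decisive: it forces $K$ to vanish to exactly first order, so that $K\sim\operatorname{dist}(\cdot,\partial\mathcal M)$ near the boundary, the degeneracy is of the mild transversal type, and the boundary flux on the right is governed by a weight tending to zero along $\partial\mathcal M$. I would use this to prove that the boundary integral vanishes, leaving $\int_{\mathcal M}Q(\operatorname{cof} b;a)\,dg=0$ and hence $a\equiv0$ on the open set $\{K>0\}=\mathcal M^{\circ}$. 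The genuine technical crux is the regularity of $a$ up to the degenerate boundary: one must show that $a$ cannot grow faster than the first-order vanishing of the weight allows, which is exactly the control that $\nabla K\neq0$ supplies and which I expect to carry the real weight of the argument.

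Once $a\equiv0$ in the interior, continuity gives $b_{ij}=\tilde b_{ij}$ throughout $\mathcal M$, after normalizing by an ambient isometry whose infinitesimal generators are the trivial solutions $\vec\tau=A\vec r+\vec b$ of \eqref{1.2}. The same conclusion may be reached by the maximum principle: $a$ solves a linear second order equation that is uniformly elliptic on compact subsets of $\mathcal M^{\circ}$, so an interior extremum is excluded, while a Hopf-type boundary lemma for the degenerate operator---again applicable only because $\nabla K\neq0$---rules out a boundary extremum. With both the first and second fundamental forms now coinciding, the fundamental theorem of surface theory supplies an isometry of $\mathbb R^{3}$ carrying $\vec r$ onto $\tilde r$, and therefore Alexandrov's annuli are globally rigid.
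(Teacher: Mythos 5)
Your overall frame agrees with the paper's proof in its interior part: form the difference tensor $a=W$ of the two second fundamental forms, observe that it satisfies the pointwise linear constraint $\langle\mathrm{cof}(\tfrac12(h+\tilde h)),a\rangle=0$ (this is exactly the paper's $\bar h^{ij}W_{ij}=0$ with $\bar h=h+\tilde h$) together with the Codazzi equation, and run a cofactor-weighted energy identity. The one-form you hope to ``manufacture'' is realized concretely in the paper by the Darboux potential $\Phi=\tilde\rho-\rho=\tfrac12\bigl(|\tilde r|^{2}-|\vec r|^{2}\bigr)$ through identity \eqref{2.13}, combined with the divergence-free property \eqref{3.2} of $\det(\bar h)\,\bar h^{ij}\bar h^{kl}W_{jl}$; up to that standard step your interior argument matches the paper's.

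The genuine gap is your treatment of the boundary term. You propose that the flux vanishes because the weight ``tends to zero along $\partial\mathcal M$'' once $\nabla K\neq0$ forces first-order vanishing of $K$, supplemented by a regularity estimate on $a$. That mechanism fails: on $\partial\mathcal M$ the cofactor $\mathrm{cof}(\bar h)$ does not tend to zero, it only drops to rank one (in geodesic coordinates $\bar h_{11}=\bar h_{12}=0$ but $\bar h_{22}>0$, so $\mathrm{cof}(\bar h)^{11}=\bar h_{22}>0$), and in the flux $X\cdot\vec\nu$ with $X^{k}=\det(\bar h)\,\bar h^{ij}\bar h^{kl}\Phi_iW_{jl}$ the coefficients multiplying $W_{jl}$ stay bounded and generically nonzero as one approaches the boundary (e.g.\ $\bar h^{22}\to 1/\bar h_{22}>0$, since $\det\bar h$ and $\mathrm{cof}(\bar h)^{22}=\bar h_{11}$ both vanish to first order). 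So no decay of the weight kills the integral, and no a priori regularity of $a$ substitutes for knowledge of its boundary values. What actually kills the flux in the paper is Lemma~\ref{HH} (Alexandrov): at boundary points where $K=0$ and $\nabla K\neq0$, the second fundamental form of \emph{any} isometric embedding is determined by the intrinsic data, namely $L=M=0$, $N=\sqrt{K_t/B_t}$, $\partial_tL=\sqrt{K_tB_t}$, so the difference tensor $W$ vanishes identically on $\partial\mathcal M$ and the boundary integrand is pointwise zero. This is precisely where the hypotheses $K=0$, $\nabla K\neq0$ on $\partial\mathcal M$ enter (together with Dong's lemma $K_tk_g>0$) --- through this exact boundary determination, not through vanishing-order estimates. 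Without this ingredient your proof cannot close; with it, the boundary discussion you anticipate as the ``technical crux'' collapses to one line.
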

\begin{theo}\label{Yau}
 Alexandrov's annuli $\vec{r}$ is infinitesimally rigid.
\end{theo}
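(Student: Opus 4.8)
The plan is to linearize the fundamental forms, read off from the Darboux equation that the variation of the second fundamental form is a trace-free Codazzi tensor, and then run a Herglotz-type energy identity whose only obstruction is a boundary integral that the three Alexandrov conditions force to vanish.

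First I would linearize at $\vec r=\vec r_0$. If $\vec r_t$ is a deformation with $\frac{d}{dt}g_t|_{0}=0$, then the first fundamental form is unchanged to first order, so the deformation is encoded in the variation $\phi_{ij}:=\frac{d}{dt}(h_t)_{ij}|_0$ of the second fundamental form, a symmetric $2$-tensor. Because the Darboux equation expresses the Gauss equation $\det h=K\det g$ for a height function, and since $K$ and $\det g$ depend only on $g$, differentiating it at $t=0$ yields the linearized relation $\operatorname{Cof}(h)^{ij}\phi_{ij}=0$, i.e.\ $\phi$ is trace-free with respect to $h$; the Codazzi equations, being preserved to first order, say that $\phi$ is in addition a Codazzi tensor. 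By the fundamental theorem of surface theory a solution $\vec\tau$ of \eqref{1.2} is trivial precisely when $\phi\equiv0$, so the whole problem is reduced to proving that a trace-free Codazzi tensor on the Alexandrov annulus must vanish.

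Next I would set up the energy identity that detects $\phi$. On a surface of positive curvature a trace-free Codazzi tensor is the real part of a holomorphic quadratic differential, and, integrating the divergence of a suitably chosen vector field $W$ built from $\phi$ and the geometry (this is the infinitesimal Herglotz--Blaschke computation underlying Theorem~\ref{CB}), one obtains an identity of the form
\begin{equation*}
\int_{\mathcal M}P\,dg=\oint_{\partial\mathcal M}\langle W,\nu\rangle\,ds ,\qquad P\ge 0,\quad P>0\ \text{wherever}\ \phi\neq0 ,
\end{equation*}
where the nonnegativity of the interior integrand $P$ (a positive multiple of $|\phi|^2$, with coefficient of order $K^{-1}$) is exactly where $K>0$ is used, and where that coefficient blows up as $K\to0$ on $\partial\mathcal M$. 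For a closed convex surface $\partial\mathcal M=\varnothing$, the right-hand side is absent and one concludes $\phi\equiv0$, recovering Theorem~\ref{CB}; for the annulus the entire argument rests on the boundary integral.

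The main obstacle, and the point at which all three Alexandrov hypotheses enter, is the evaluation of this boundary integral. Since $K=0$ on $\partial\mathcal M$, the surface is asymptotically flat there and the elliptic weight is singular, so finiteness of the energy already forces $\phi$ to decay at $\partial\mathcal M$; the transversality hypothesis $\nabla K\neq0$ fixes the exact rate of this degeneration and, through a Hopf-type boundary lemma adapted to the degenerate operator (this is where the maximum principle is used), controls the normal derivative of $\phi$ and hence the sign of $\langle W,\nu\rangle$ along $\partial\mathcal M$. Simultaneously the condition $\int_{\mathcal M}K\,dg=4\pi$ forces, by Gauss--Bonnet with $\chi(\mathcal M)=0$, that $\oint_{\partial\mathcal M}k_g\,ds=-4\pi$ and that the Gauss image covers $S^2$ once; this global convexity information pins down the two boundary curves and shows that $\oint_{\partial\mathcal M}\langle W,\nu\rangle\,ds\le 0$. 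Combined with $P\ge0$ this forces $P\equiv0$, hence $\phi\equiv0$ and the deformation is trivial, so the Alexandrov annulus is infinitesimally rigid. I expect the genuinely delicate step to be precisely this boundary analysis: carrying out the Hopf lemma at the characteristic set $\{K=0\}$ and reconciling the singularity of the weight with the finiteness of the energy, for which the nondegeneracy $\nabla K\neq0$ is indispensable.
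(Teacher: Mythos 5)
Your setup is sound and matches the outline of the paper's first (energy) proof: the deformation is encoded in a symmetric tensor solving the homogeneous linearized Gauss--Codazzi system (the paper's $w_{ij}$, your $\phi_{ij}$), and one wants to kill it by a weighted energy identity whose interior term is a nonnegative quadratic form with weight of order $K^{-1}$ and whose only other term is a boundary integral. The genuine gap is that the decisive step --- proving the boundary integral is nonpositive --- is asserted rather than proved, and the mechanisms you invoke would not produce it. A Hopf-type boundary lemma applies to a scalar solving a (possibly degenerate) elliptic equation; $\phi$ is a tensor solving a first-order system, and you never derive an elliptic equation for any scalar built from it. (The paper's second, maximum-principle proof does exactly this, but for the potential $\psi$ of the closed $1$-form $\omega=d\vec Y\cdot\vec E$, where $\vec E$ is specially constructed to be parallel to the normal along $\partial\mathcal M$; moreover, on an annulus closed does not imply exact, so even obtaining $\psi$ requires the period computation of Lemma~\ref{homology} --- a point your sketch never addresses.) Likewise, Gauss--Bonnet and the fact that the Gauss image has area $4\pi$ pin down no sign for $\oint\langle W,\nu\rangle\,ds$: nothing in your argument couples the boundary values of $\phi$ to the closure of the boundary curves.

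What the paper actually does at the boundary is an explicit and quite rigid computation, and it is not replaceable by the soft considerations you list. In geodesic coordinates the degeneration $h_{11}=h_{12}=0$ (Lemma~\ref{HH}) forces $w_{11}=0$ there via the linearized Gauss equation, and the boundary term collapses to $\frac1\mu\oint_\sigma\varphi_s F\,ds$ with $F=w_{12}\mu$ and $\varphi=\vec r\cdot\vec\tau$. Equation \eqref{3.6} is then a linear ODE for $(\varphi_s,\varphi_t)$ along $\sigma$ with forcing $F$, solved explicitly in terms of $u(\theta)=\int_0^\theta f\sin x\,dx$, $v(\theta)=\int_0^\theta f\cos x\,dx$, $f=F/k_g$. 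The sign then comes from combining: the closure conditions of Dong's Lemma~\ref{Dong}, namely $\oint_\sigma k_g\,ds=2\pi$ and $\oint_\sigma\exp(\sqrt{-1}\int_0^s k_g)\,ds=0$; the period condition $u(2\pi)=v(2\pi)=0$ coming from $\oint_\sigma\vec Y_s=0$; the exactness condition $\oint_\sigma\varphi_s\,ds=0$; and finally the auxiliary convex curve $\Gamma$ of curvature $k_g$ together with the corrected functions $U,V$ satisfying $U'\cot\theta=V'$, $U(0)=U(\pi)=0$, which yield
\begin{equation*}
\oint_\sigma\varphi_s F\,ds=2\int_0^{2\pi}\bigl(-v'(\theta)u(\theta)\bigr)\,d\theta\le -C^2S\le 0 .
\end{equation*}
This chain is precisely where the hypotheses $K=0$, $\nabla K\neq0$ on $\partial\mathcal M$ and $\int_{\mathcal M}K\,dg=4\pi$ are converted into a sign; your proposal has the right skeleton but is missing the proof of its central inequality.
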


For the case of $n\geq 3,$ Dajczer-Rodriguez \cite{DR} proved

\begin{theo}\label{DR}
If the rank of the matrix $(h_{ij})$ is greater than $2$, where $h=h_{ij}dx^idx^j$ is the second fundamental form, then the hypersurface is globally and infinitesimally rigid.
\end{theo}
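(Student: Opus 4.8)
The plan is to reduce Theorem~\ref{DR} to a single pointwise algebraic fact about the Gauss equation and then to invoke the uniqueness half of the fundamental theorem of hypersurfaces. Introduce the shape operator $S$, the $g$--self-adjoint operator with $\langle Sx,y\rangle=h(x,y)$, so that the Gauss equation reads
\[
R(x,y,z,w)=\langle Sx,z\rangle\langle Sy,w\rangle-\langle Sx,w\rangle\langle Sy,z\rangle ,
\]
which shows that $R$ is intrinsic to $g$. If $\tilde{\vec r}$ is a second immersion inducing the same metric $g$, its shape operator $\tilde S$ satisfies the same identity with the same $R$; equivalently, viewing $Sx$ as a $1$-form via $g$,
\[
Sx\wedge Sy=\tilde S x\wedge\tilde S y\qquad\text{for all }x,y .
\]
The crux is the Beez--Killing lemma: if $\operatorname{rank}(h_{ij})\geq 3$ everywhere, this forces $\tilde S=\pm S$. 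Granting this, both immersions share the same first and second fundamental forms (once the global sign is fixed below), so the fundamental theorem makes them congruent; the infinitesimal statement will follow by linearizing the same lemma.

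To prove the algebraic lemma at a point I first locate the kernels. If $\operatorname{rank}S\geq 2$ then the relative nullity of $R$, namely $\{v:R(v,\cdot,\cdot,\cdot)=0\}$, equals $\ker S$: the inclusion $\supseteq$ is immediate, while $R(v,\cdot,\cdot,\cdot)=0$ with $Sv\neq0$ would force every $Sy$ to be proportional to $Sv$, contradicting $\operatorname{rank}S\geq 2$. Applying the same computation to $\tilde S$ (and noting $\operatorname{rank}\tilde S\leq 1$ would give $R\equiv 0$, hence $\operatorname{rank}S\leq 1$) yields $\ker\tilde S=\ker S$; since both operators are self-adjoint, $\operatorname{Im}S=(\ker S)^{\perp}=\operatorname{Im}\tilde S=:U$ with $\dim U=\operatorname{rank}S=r\geq 3$. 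On $U$ both operators are invertible, so $T:=\tilde S\,S^{-1}\in GL(U)$ is defined and the Gauss identity becomes $u\wedge v=Tu\wedge Tv$ for all $u,v\in U$, i.e.\ $\Lambda^{2}T=\mathrm{id}$. For nonzero $u$ the line $\langle u\rangle$ is recovered from the subspace $u\wedge U\subset\Lambda^{2}U$ as the common factor of its decomposable elements, so the equality $u\wedge U=Tu\wedge U$ forces $Tu\in\langle u\rangle$. As every vector is then an eigenvector, $T=cI$, and $\Lambda^{2}T=\mathrm{id}$ gives $c^{2}=1$. Hence $\tilde S=\pm S$ on $U$, and both vanish on $U^{\perp}$, so $\tilde S=\pm S$ pointwise.

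The sign $\epsilon(p)\in\{\pm1\}$ with $\tilde h=\epsilon\,h$ is continuous wherever $h$ has constant rank $\geq 3$, hence locally constant; since $\mathcal M$ is connected it is a global constant, and a global sign reversal is realized by a reflection of $\mathbb R^{n+1}$. Thus $\vec r$ and $\tilde{\vec r}$ have the same $g$ and $h$, and the fundamental theorem of hypersurfaces gives congruence, which is global rigidity. For infinitesimal rigidity I linearize: a solution $\vec\tau$ of \eqref{1.2} fixes $g$ to first order, hence fixes $R$, so the linearized Gauss equation reads
\[
\dot h_{ik}h_{jl}+h_{ik}\dot h_{jl}-\dot h_{il}h_{jk}-h_{il}\dot h_{jk}=0 ,
\]
where $\dot h$ is the induced variation of the second fundamental form. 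Diagonalizing $h=\operatorname{diag}(\lambda_{1},\dots,\lambda_{n})$ with $\lambda_{1},\dots,\lambda_{r}\neq 0$ and $r\geq 3$, the diagonal components give $\lambda_{i}\dot h_{jj}+\lambda_{j}\dot h_{ii}=0$ and the mixed ones give $\lambda_{i}\dot h_{jl}=0$ for suitable distinct indices; using three nonzero eigenvalues these force every entry $\dot h_{ij}=0$. With $\dot g=0$ and $\dot h=0$, the infinitesimal form of the fundamental theorem makes $\vec\tau$ the restriction of a Killing field of $\mathbb R^{n+1}$, that is $\vec\tau=A\vec r+\vec b$ with $A$ skew, which is exactly the trivial solution.

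The main obstacle is the algebraic lemma, and precisely the place where $\operatorname{rank}\geq 3$ is indispensable: the implication $\Lambda^{2}T=\mathrm{id}\Rightarrow T=\pm I$ fails in rank $2$, where $\Lambda^{2}U$ is one-dimensional and $T$ need only have determinant $1$; this is the origin of the genuine non-rigidity of rank-two hypersurfaces. Establishing $\operatorname{Im}S=\operatorname{Im}\tilde S$ cleanly, rather than assuming $S$ and $\tilde S$ are simultaneously diagonalizable, is the step that requires the most care, and the wedge reformulation is what makes it transparent. I also note that the argument is purely pointwise and uses only the Gauss equation together with connectedness, so, unlike the $n=2$ results above, neither compactness nor the Codazzi equation is needed, only the uniqueness assertion of the fundamental theorem.
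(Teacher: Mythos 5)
Your proposal is correct, and it shares the paper's guiding principle that Theorem~\ref{DR} is purely pointwise and algebraic: both proofs diagonalize $h$ at a point and exploit the Gauss equation and its linearization, and your infinitesimal half is essentially the paper's verbatim --- your diagonal equations $\lambda_i\dot h_{jj}+\lambda_j\dot h_{ii}=0$ together with the mixed ones are exactly the paper's system (4.23)--(4.25), solved by the same invertible $3\times 3$ coefficient matrix. The real differences are in the global half and in the scaffolding. For global rigidity the paper's key lemma is a cofactor trick: the adjugate $H_3^{*}$ of a full-rank $3\times 3$ block of $h$ has entries which are $2\times 2$ minors of $h$, hence curvature components by Gauss, hence intrinsic, and $H_3$ is then recovered from $H_3^{*}$. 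You instead prove the Beez--Killing statement that $\Lambda^{2}T=\mathrm{id}$ forces $T=\pm I$ in dimension at least $3$, after first matching kernels and images of $S$ and $\tilde S$ through the relative nullity of $R$. Your route buys rigor at precisely the points the paper glosses over: the adjugate determines $H_3$ only up to sign (for $3\times 3$ matrices $\mathrm{adj}(-H_3)=\mathrm{adj}(H_3)$), and your continuity-of-$\epsilon$/connectedness/reflection argument, together with the verification $\ker S=\ker\tilde S$ and $\operatorname{Im}S=\operatorname{Im}\tilde S$ (rather than tacitly treating $h$ and $\tilde h$ as simultaneously handled by one block), fills genuine gaps in the paper's two-sentence treatment. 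Conversely, the paper's infinitesimal half is more self-contained than yours: its tensor $w_{ij}$ is built from the rotation bivector $\vec{Y}$ of Section 4, so $w=0$ yields $d\vec{Y}=0$ and triviality of $\vec{\tau}$ directly, whereas you must invoke the infinitesimal uniqueness part of the fundamental theorem of hypersurfaces (standard, but a black box here) to pass from $\dot g=0$, $\dot h=0$ to $\vec{\tau}=A\vec{r}+\vec{b}$; your global half likewise outsources congruence to the fundamental theorem, which the paper does only implicitly. Both arguments agree on the essential point you highlight: only the Gauss equation, not Codazzi, and no topology beyond connectedness, is needed once the rank hypothesis is in force.
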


\begin{rema}
Compared with the case of $n=2,$ Dajczer-Rodriguez's theorem is local without any topological restriction on $\mathcal{M}.$
\end{rema}

\section{Set up and formulation}

Before discussing the rigidity of Alexandrov's annuli, we need some geometric preliminaries.

We use the geodesic coordinates $(s,t)=(x^1,x^2)$ based on $\partial \mathcal{M}$,
\begin{eqnarray}
&& g=dt^2+B^2ds^2,\nonumber\\
&&B(s,0)=1,B_t(s,0)=k_g,\nonumber
\end{eqnarray}
where $B(s,t)$ is a sufficiently smooth function and $B(s,t)$ is periodic in $s$,
and $k_g$ is geodesic curvature.

Under the geodesic coordinates, Alexandrov proved \cite{A,HH}
\begin{lemm}\label{HH}
For Alexandrov's annuli, the coefficients of the second
fundamental form of $ {\vec{r}}$, $ L,M$ and $N$ satisfy: at $t=0,$
\begin{eqnarray}
&&L=M=0,\nonumber\\
&&\partial_tL=\sqrt {K_tB_t},N=\sqrt {\frac {K_t}{B_t}}.
\end{eqnarray}
\end{lemm}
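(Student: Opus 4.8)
The plan is to split the statement into two independent parts: the vanishing $L=M=0$ on $\partial\mathcal{M}$, which is a geometric consequence of Alexandrov's assumption, and the two boundary identities, which will fall out of the Gauss and Codazzi equations once $L=M=0$ is known. First I would record the two expressions for the Gauss curvature in the geodesic frame: intrinsically $K=-B_{tt}/B$, so that $K=0$ on $\partial\mathcal{M}$ forces $B_{tt}(s,0)=0$, and extrinsically the Gauss equation reads $LN-M^2=KB^2$. Since $K>0$ in $\mathcal{M}$ the surface is locally convex, and after fixing the orientation by the inner normal the matrix $\left(\begin{smallmatrix}L&M\\M&N\end{smallmatrix}\right)$ is positive semidefinite; on $\partial\mathcal{M}$ the Gauss equation gives $LN-M^2=0$, so it has rank at most one there.

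To locate the null direction I would use the remaining hypotheses $\int_{\mathcal{M}}K\,dg=4\pi$ and $K=0,\ \nabla K\neq0$ on $\partial\mathcal{M}$. Convexity makes the Gauss map $\nu$ injective on the interior $\{K>0\}$, and $\int_{\mathcal{M}}K\,dg=4\pi$ forces its spherical image to have the full area of $S^2$; since the interior is an open annulus, a connectedness argument then shows that each boundary circle must collapse to a single point under $\nu$. Consequently $d\nu$ annihilates the boundary tangent $\partial_s$, i.e. the shape operator kills $\partial_s$, which is exactly $L=II(\partial_s,\partial_s)=0$. Combined with $LN-M^2=0$ this yields $M^2=LN=0$, hence $M=0$, completing the first part. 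This is the step I expect to be the main obstacle, and where Alexandrov's global hypothesis is genuinely used; if one prefers, it may simply be invoked from \cite{A,HH}.

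With $L=M=0$ on $t=0$ I would extract the two identities by differentiating the Gauss equation once in $t$ and reading off one Codazzi equation. Differentiating $LN-M^2=KB^2$ in $t$ and evaluating at $t=0$, where $L=M=K=0$ and $B=1$, every term carrying a factor $L$, $M$ or $K$ drops, leaving
\[
\partial_t L\cdot N=K_t \qquad\text{on }\partial\mathcal{M}.
\]
For the second relation I would use the Codazzi equation relating $\partial_t L$ and $\partial_s M$. Computing the Christoffel symbols of $g=dt^2+B^2ds^2$ (the only ones entering are $\Gamma^1_{11}=B_s/B$, $\Gamma^1_{12}=B_t/B$, $\Gamma^2_{11}=-BB_t$), it takes the form $\partial_t L-\partial_s M=(B_t/B)L-(B_s/B)M+NBB_t$. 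On $\partial\mathcal{M}$ we have $L=M=0$, and since $M\equiv0$ along the boundary also $\partial_s M=0$ there, so with $B=1$, $B_t=k_g$ this collapses to
\[
\partial_t L=N\,B_t \qquad\text{on }\partial\mathcal{M}.
\]

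Finally I would solve the two boundary relations simultaneously. Substituting $\partial_t L=NB_t$ into $\partial_t L\cdot N=K_t$ gives $N^2B_t=K_t$, whence $N=\sqrt{K_t/B_t}$ and then $\partial_t L=NB_t=\sqrt{K_tB_t}$. Here the positive roots are the correct ones: $N=II(\partial_t,\partial_t)\ge0$ by the choice of orientation, $B_t(s,0)=k_g>0$, and $K_t>0$ on $\partial\mathcal{M}$, since $K$ vanishes on the boundary, is positive inside, and $\partial_sK=0$ forces $\nabla K$ to point in the $t$-direction. Apart from the asymptotic-direction step of the first part, the entire argument is a short computation with Gauss and Codazzi.
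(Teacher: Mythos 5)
First, a point of calibration: the paper does not actually prove this lemma --- it is quoted from Alexandrov and Han--Hong (``Under the geodesic coordinates, Alexandrov proved \cite{A,HH}''), so your proposal is being measured against the literature, not against an in-paper argument. With that understood, your second half is correct and complete, and is in fact more than the paper writes down: granting $L=M=0$ at $t=0$, differentiating the Gauss equation $LN-M^2=KB^2$ in $t$ gives $\partial_t L\cdot N=K_t$ there; your Christoffel symbols for $dt^2+B^2ds^2$ are right, the Codazzi equation together with $\partial_s M=0$ along the boundary gives $\partial_t L=NB_t$; and solving the pair yields $N=\sqrt{K_t/B_t}$, $\partial_t L=\sqrt{K_tB_t}$ with the correct signs. (You do not even need to assume $k_g>0$: the relation $N^2B_t=K_t>0$ forces it.)

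The genuine gap is in your first half, exactly where you suspected. Two steps are asserted rather than proved. (i) ``Convexity makes the Gauss map injective on the interior'': $K>0$ only makes $\nu$ a local diffeomorphism; global injectivity is a Hadamard-type statement valid for closed surfaces, and for a compact locally convex surface with boundary it can fail. Worse, the standard way to get injectivity for Alexandrov's annuli is to complete the surface to a closed convex surface by gluing in the planar disks bounded by $\vec{r}(\sigma_k)$ --- which presupposes the very conclusion you are proving (constant normal along each $\sigma_k$), so this step risks circularity. (ii) The ``connectedness argument'' forcing each $\nu(\sigma_k)$ to collapse to a point is not supplied, and it is not soft: at a boundary point where the shape operator does not kill $\partial_s$, the Gauss map has a fold (rank one, kernel transverse to $\sigma_k$), and ruling out fold curves requires genuine multiplicity-versus-area bookkeeping in which regions of multiplicity $0$ and $\geq 2$ could a priori compensate. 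That this part must use $\int_{\mathcal{M}}K\,dg=4\pi$ quantitatively is shown by the rotational cap $z=f(r)$ with $f''(1)=0$, $f'''(1)<0$: it satisfies $K>0$ inside, $K=0$ and $\nabla K\neq 0$ on the boundary, yet $L=h_{11}>0$ there, so the conclusion simply fails when the total curvature is below $4\pi$. Since you explicitly offer to fall back on \cite{A,HH} for this step, your proposal ends up with the same logical status as the paper's treatment (a citation, plus --- in your case --- a correct derivation of the two boundary identities); but as a self-contained proof it is incomplete at precisely this point, and the heuristic as written could not be repaired without substantial additional argument.
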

Since on $\partial \mathcal{M}$, $d\vec{n}=0$ and $k_n=0$ where $\vec{n}$ and $k_n$ are normal vector and normal curvature respectively,
 we have
\begin{lemm}\label{planar boundary}
The components of boundary $\vec{r}(\partial \mathcal{M})$ are some planar curves $\sigma_k, 1\leq k\leq m, $ which are determined
completely by its metric, and lies on the plane $\pi_k$ tangential to $\vec{r}$ along $\sigma_k.$
\end{lemm}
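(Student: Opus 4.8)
The plan is to extract planarity directly from the two boundary facts just recorded --- $d\vec{n}=0$ and $k_n=0$ along $\partial\mathcal{M}$ --- reinforced by the conclusion $L=M=0$ of Lemma~\ref{HH}. First I would fix one component $\sigma_k$ of $\vec{r}(\partial\mathcal{M})$ and parametrize it by $s$ along $t=0$. Since $B(s,0)=1$, the metric restricts to $ds^2$ on the boundary, so $s$ is arc length and $\vec{r}_s$ is the unit tangent of $\sigma_k$. The vanishing $L=M=0$ means the second fundamental form annihilates $\partial_s$, i.e. the shape operator satisfies $S(\partial_s)=0$, equivalently $d\vec{n}(\partial_s)=0$. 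Hence the unit normal $\vec{n}$ is constant along $\sigma_k$; denote its value by $\vec{n}_k$.

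Next I would integrate this observation. Because $\vec{n}_k$ is orthogonal to the tangent plane and $\vec{r}_s$ is tangent, $\vec{n}_k\cdot\vec{r}_s\equiv 0$ on $\sigma_k$, so $\tfrac{d}{ds}(\vec{n}_k\cdot\vec{r})=0$ and $\vec{n}_k\cdot\vec{r}\equiv c_k$ is constant. Thus $\sigma_k$ lies in the affine plane $\pi_k=\{\vec{x}:\vec{n}_k\cdot\vec{x}=c_k\}$. Since $\pi_k$ has normal $\vec{n}_k=\vec{n}$, it coincides with the tangent plane of $\vec{r}$ at every point of $\sigma_k$, which is precisely the claimed tangency of $\pi_k$ to $\vec{r}$ along $\sigma_k$.

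It remains to see that $\sigma_k$ is determined by the metric. As a closed curve lying in the plane $\pi_k$ and parametrized by arc length, $\sigma_k$ is determined up to a rigid motion of $\pi_k$ by its signed curvature. Since $k_n=0$ along the boundary, the curvature vector $\vec{r}_{ss}$ is purely tangential, so the space curvature of $\sigma_k$ equals its geodesic curvature $k_g=B_t(s,0)$, an intrinsic quantity read off from $g$. Hence $\sigma_k$ is fixed, up to congruence, by the metric alone. The one point needing care is the bookkeeping that identifies the stated $d\vec{n}=0$ with the tangential equation $d\vec{n}(\partial_s)=0$: the normal does vary in the transverse direction, since $N=\sqrt{K_t/B_t}\neq0$ in Lemma~\ref{HH} forces $S(\partial_t)\neq0$, so one must be careful to use only the along-boundary derivative when concluding that $\vec{n}$ is constant on $\sigma_k$.
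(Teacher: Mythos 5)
Your proof is correct and follows essentially the same route as the paper, which asserts the lemma in a single line from the boundary facts $d\vec{n}=0$ and $k_n=0$: your argument is the detailed version of exactly that assertion, with $L=M=0$ giving constancy of $\vec{n}$ along each component (hence planarity and tangency), and $k_n=0$ identifying the plane curvature of $\sigma_k$ with the intrinsic $k_g=B_t(s,0)$, so the curve is fixed by the metric up to congruence. Your closing caveat---that only the along-boundary derivative of $\vec{n}$ vanishes, since $N=\sqrt{K_t/B_t}\neq 0$ forces the normal to vary transversally---is a correct and worthwhile clarification of the paper's somewhat loose statement ``$d\vec{n}=0$ on $\partial\mathcal{M}$.''
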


At the same time, Dong \cite{D} proved the following

\begin{lemm}\label{Dong}If there exist sufficiently smooth isometric embedding
$$\vec{r}:\mathcal{M}\rightarrow R^3, g=d\vec{r}^2,$$
then we have
\begin{eqnarray}
&&K_tk_{g}>0,\text{ on }\partial \mathcal{M},\\
&&\oint_{\sigma_k} k_gds=2\pi, \\
&&\oint_{\sigma_k}\exp{(\sqrt{-1}\int_{0}^{s}k_gd\theta)}ds=0.
\end{eqnarray}
\end{lemm}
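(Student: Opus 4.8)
The plan is to read off all three identities from the planar-boundary structure of Lemma~\ref{planar boundary} together with the curvature data of Lemma~\ref{HH}. Throughout I take $s$ to be the arclength parameter on $\partial\mathcal M$ (recall $B(s,0)=1$) with $t\ge 0$ measured into the interior, and I orient each component $\sigma_k$ and its unit conormal so that the boundary geodesic curvature is $B_t(s,0)=k_g$. To obtain $K_tk_g>0$ I would first argue $K_t>0$: since $K>0$ in the interior and $K=0$ on $\partial\mathcal M$, one has $\partial_tK\ge 0$ along the boundary, and because $K\equiv 0$ on $\partial\mathcal M$ forces $\partial_sK=0$ there, the hypothesis $\nabla K\neq 0$ upgrades this to $K_t>0$. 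The sign of $k_g$ is then forced by Lemma~\ref{HH}: the entries $N=\sqrt{K_t/B_t}$ and $\partial_tL=\sqrt{K_tB_t}$ are genuine real, finite coefficients of the second fundamental form of the actual embedding $\vec r$, so $K_t/k_g=N^2\ge 0$ and $K_tk_g=(\partial_tL)^2\ge 0$; with $K_t>0$ and $\nabla K\neq 0$ excluding degeneracy this gives $k_g>0$, hence $K_tk_g>0$.

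For the total-curvature identity I would exploit that $k_n=0$ along $\partial\mathcal M$, which is exactly the content $L=M=0$ at $t=0$ in Lemma~\ref{HH}. This means the space curvature vector of the boundary curve has no normal component, so along $\sigma_k$ it lies in the tangent plane $\pi_k$; combined with Lemma~\ref{planar boundary} this shows $\sigma_k$ is a simple closed curve lying in the plane $\pi_k$, and its signed planar curvature coincides with $k_g$. Applying the theorem of turning tangents (Hopf's Umlaufsatz) to $\sigma_k$ then yields $\oint_{\sigma_k}k_g\,ds=2\pi$, with the orientation fixed above.

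The closure identity is simply the statement that $\sigma_k$ closes up. Identifying $\pi_k$ with $\mathbb C$, the unit tangent of $\sigma_k$ obeys the planar Frenet equation $T'(s)=\sqrt{-1}\,k_g(s)\,T(s)$, so that $T(s)=T(0)\exp\!\big(\sqrt{-1}\int_0^s k_g\,d\theta\big)$. Integrating over one period and using $\oint_{\sigma_k}T(s)\,ds=\oint_{\sigma_k}\sigma_k'(s)\,ds=0$, which holds because $\sigma_k$ is a closed curve, together with $T(0)\neq 0$, I obtain $\oint_{\sigma_k}\exp\!\big(\sqrt{-1}\int_0^s k_g\,d\theta\big)\,ds=0$.

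I expect the main obstacle to be the bookkeeping of orientation conventions, so that $k_g$, the signed planar curvature of $\sigma_k$, and the $2\pi$ from the Umlaufsatz all carry consistent signs. The implication $k_n=0\Rightarrow \kappa_{\text{planar}}=k_g$ and the reality argument forcing $k_g>0$ are the delicate points that must be handled carefully, after which the turning-tangent and closed-curve steps become routine.
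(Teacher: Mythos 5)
There is no internal proof to compare against: the paper quotes Lemma \ref{Dong} from Dong \cite{D} without proof, immediately after stating Lemmas \ref{HH} and \ref{planar boundary}, so your derivation has to be judged on its own merits. It is correct, and it is the natural way to extract Dong's three statements from the two quoted lemmas. Three remarks. First, for (2.2) your intrinsic step is right ($K\equiv 0$ on $\partial\mathcal M$ gives $K_s=0$ there, so $\nabla K\neq 0$ forces $K_t>0$ once $t$ points inward), and reading $k_g>0$ off the reality and finiteness of $N=\sqrt{K_t/B_t}$ and $\partial_tL=\sqrt{K_tB_t}$ is legitimate in this paper's logical order; but be aware that this sign information is exactly what Lemma \ref{HH} packages (Codazzi gives $\partial_tL=k_gN$ and differentiating $\det h=K|g|$ in $t$ gives $\partial_tL\cdot N=K_t$ at $t=0$, whence $k_gN^2=K_t$), so your proof of (2.2) unwinds the quoted lemma rather than standing independently of it --- acceptable here, circular if Lemma \ref{HH} were not taken as given. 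Second, in (2.3) you should state explicitly that simplicity of $\sigma_k$, which the Umlaufsatz requires, comes from $\vec r$ being an embedding (the lemma's hypothesis); on the other hand, the orientation bookkeeping you flag as delicate is actually harmless: once $k_g>0$ is known, the total turning is a positive integer multiple of $2\pi$, and the Umlaufsatz caps its absolute value at $2\pi$, so $\oint_{\sigma_k}k_g\,ds=2\pi$ whatever conventions are fixed. (If you prefer to avoid simplicity altogether, Gauss--Bonnet with $\int_{\mathcal M}K\,dg=4\pi$ and $\chi(\mathcal M)=2-m$ gives $\sum_k\oint_{\sigma_k}k_g\,ds=2\pi m$, and a rotation index of at least one per locally convex component then forces each summand to equal $2\pi$.) Third, your argument for (2.4) --- solve the planar Frenet equation for the unit tangent and use $\oint\vec r_s\,ds=0$ --- is precisely the computation the paper itself performs later in Section 3, in \eqref{curve equation}--\eqref{the tangent} and the sentence following them, so on that point you are in exact agreement with the authors.
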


In what follows we will formulate for the rigidity.

Let
\begin{eqnarray}\label{2.5}
\rho=\frac{1}{2}\vec{r}\cdot\vec{r},  \text{ and }\tilde{\rho}=\frac{1}{2}\tilde{r}\cdot\tilde{r},\nonumber\\
\mu=\vec{r}\cdot\vec{n},  \text{ and } \tilde{\mu}=\tilde{r}\cdot\tilde{n},
\end{eqnarray}
we have
\begin{eqnarray}
\vec{r}=g^{ij}\rho_i\vec{r}_{j}+\mu \vec{n},\nonumber\\
\mu^2=2\rho-|\nabla\rho|^2,\nonumber
\end{eqnarray}
and
\begin{eqnarray}
h_{ij}\mu=\rho_{i,j}-g_{ij};\tilde{h}_{ij}\tilde{\mu}=\tilde{\rho}_{i,j}-g_{ij},\\
\det(h_{ij})=\det(\tilde{h}_{ij})=K|g|,
\end{eqnarray}
where $h=h_{ij}dx^idx^j, \tilde{h}=\tilde{h}_{ij}dx^idx^j$ are the second fundamental forms respectively, $K$ is the Gaussian curvature.

Let $W_{ij}=\tilde{h}_{ij}-h_{ij}$ and $\Phi=\tilde{\rho}-\rho,$ by (2.6)-(2.7) we have
\begin{eqnarray}
(\tilde{h}_{ij}-W_{ij})\mu=\tilde{\rho}_{i,j}-\Phi_{i,j}-g_{ij}=\tilde{h}_{ij}\tilde{\mu}-\Phi_{i,j};\\
(h_{ij}+W_{ij})\tilde{\mu}=\rho_{i,j}+\Phi_{i,j}-g_{ij}=h_{ij}\mu+\Phi_{i,j};\\
\det(\tilde{h}_{ij}-W_{ij})=\det(h_{ij}+W_{ij}).
\end{eqnarray}

Taking the difference of (2.8)-(2.9) and the two sides of (2.10) yields
\begin{eqnarray}
W_{ij}(\mu+\tilde{\mu})=2\Phi_{i,j}+(h_{ij}+\tilde{h}_{ij})(\mu-\tilde{\mu});\\
(h_{11}+\tilde{h}_{11})w_{22}+(h_{22}+\tilde{h}_{22})w_{11}-2(h_{12}+\tilde{h}_{12})w_{12}=0.
\end{eqnarray}

Let $\bar{h}=h+\tilde{h}, \bar{h}_{ij}=h_{ij}+\tilde{h}_{ij},$ then
\begin{eqnarray}\label{2.13}
W_{ij}=\frac{2\Phi_{i,j}+\bar{h}_{ij}(\mu-\tilde{\mu})}{\mu+\tilde{\mu}}.
\end{eqnarray}
Gauss-Codazzi equations says
\begin{eqnarray}
&&\bar{h}^{ij}W_{ij}=0,\\
&&W_{ij,k}=W_{ik,j},
\end{eqnarray}
where  $(\bar{h}^{ij})=(\bar{h}_{ij})^{-1}.$

There exists an orthogonal mapping which sends the frame $\{r_1,r_2,n\}$ to $\{\tilde{r}_1,\tilde{r}_2,\tilde{n}\}$.
Let the associated matrix be $A$, if  $h$ and $\tilde{h}$ coincide which means $A$ is constant, i.e. $W=W_{ij}dx^idx^j=0,$
$\vec{r}$ and $\tilde{r}$ differ from an isometry and so it's globally rigid.

For the solution to \eqref{1.2} $\vec{\tau},$ let
\begin{eqnarray}
u_i=\vec{n}\cdot\vec{\tau}_i
\end{eqnarray}
and
\begin{eqnarray}
w=\frac{1}{2\sqrt{|g|}}(\vec{r}_2\cdot\vec{\tau}_1-\vec{r}_1\cdot\vec{\tau}_2).
\end{eqnarray}
Note that $u_idx^i=\vec{n}\cdot d\vec{\tau}$ is a globally well defined $1-$ form, and $w$ is a well defined function, then we have
\begin{eqnarray}
&&\vec{\tau}_1=w\sqrt{|g|}g^{2i}\vec{r}_i+u_1\vec{n},\\
&&\vec{\tau}_2=-w\sqrt{|g|}g^{1i}\vec{r}_i+u_2\vec{n}.
\end{eqnarray}
 Then for
 $$\vec{Y}=\frac{u_2\vec{r}_1-u_1\vec{r}_2}{\sqrt{|g|}}+w\vec{n},$$
 \begin{eqnarray}\label{2.20}
 d\vec{\tau}=\vec{Y}\times d\vec{r},
 \end{eqnarray}
 we call $\vec{Y}$ the rotation vector.
Differentiating  the above equation, we have
$$d^2\vec{\tau}=d\vec{Y}\times d\vec{r}=0,$$ which implies  $d\vec{Y}$ is parallel to the tangent plane.
Let $\vec{Y}_k=g^{ij}w_{ki}n\times \vec{r}_j,k=1,2,$  where $w_{ij}dx^{i}dx^{j}$ is a symmetric tensor.
$d^2\vec{Y}=0$ means
\begin{eqnarray}\label{2.21}
&&h^{ij}w_{ij}=0,\\
&&w_{ij,k}=w_{ik,j},
\end{eqnarray}
where $h=h_{ij}dx^idx^j$ is the second fundamental form and $(h^{ij})=(h_{ij})^{-1}.$
\begin{rema}
We note that $\vec{r}$ is infinitesimally rigid if and only if (2.21)-(2.22) have only trivial solution $w_{ij}=0$ provided that $\mathcal{M}$ is simply connected. In fact
$w_{ij}=0$ implies that $\vec{Y}$ is a constant.
\end{rema}

Let
\begin{eqnarray}\label{2.23}
\vec{b}=\vec{\tau}-\vec{Y}\times\vec{r},\varphi=\vec{b}\cdot\vec{r}=\vec{r}\cdot\vec{\tau},
\end{eqnarray}
we have
\begin{eqnarray}
d\vec{b}=-d\vec{Y}\times\vec{r}\\
\vec{b}=g^{ij}\varphi_i\vec{r}_j+\frac{\varphi-g^{ij}\varphi_i\rho_j}{\mu}\vec{n}.
\end{eqnarray}
Combing (2.24)-(2.25), we have
\begin{eqnarray}\label{2.26}
w_{ij}&=&\frac{\varphi_{i,j}}{\mu}+\frac{h_{ij}2(\varphi-\nabla\varphi\cdot\nabla\rho)}{\mu^2}\nonumber\\
      &=&\frac{\varphi_{i,j}}{\mu}+\frac{h_{ij}\nu}{\mu^2}.
\end{eqnarray}
If the support function $\mu\neq 0,$  $w_{ij}=0$ if and only if $\vec{b}$ is constant since $\vec{r}_1\times\vec{r},\vec{r}_2\times\vec{r}$
are linearly independent, i.e. $(\vec{r}_1\times\vec{r})\times(\vec{r}_2\times\vec{r})=\sqrt{|g|}\vec{r}\cdot\vec{n}=\sqrt{|g|}\mu.$
 For convex surface, by a translation we can assume the support function $\mu>0.$  Throughout the paper $\mu>0$ if not specified.

\section{The rigidity of surfaces in $\mathbb{R}^3$}

In this section will reprove Theorem \ref{CV}, Theorem \ref{A}  and Theorem \ref{CB}, Theorem \ref{Yau}. The main ideas are from an unpublished note \cite{LW}.

To prove Theorem \ref{CV} and Theorem \ref{A}, we introduce the following inner product,
for any two $(0,2)-$ symmetric tensors $\alpha=\alpha_{ik}dx^i\otimes dx^k, \beta=\beta_{jl}dx^j\otimes dx^l$,
\begin{equation}\label{inner product}
(\alpha,\beta)=\int_{\mathcal{M}}\frac{\det(\bar{h})}{\det(g)}\bar{h}^{ij}\bar{h}^{kl}\alpha_{ik}\beta_{jl}(\mu+\tilde{\mu})dV_g.
\end{equation}
Since $\bar{h}=h+\tilde{h}$ is positive definite,  we can view $\bar{h}=\bar{h}_{ij}dx^i\otimes dx^j$ as a Riemannian metric defined on $\mathcal{M}.$ Then
the cotangent bundle is endowed with the metric
\begin{equation}\label{ cotangent metric}
<dx^i,dx^j>=\bar{h}^{ij},
\end{equation}
and the metric induces a metric on the tensor bundle $T^{*}\mathcal{M}\otimes T^{*}\mathcal{M},$
\begin{equation}\label{tensor bundle metric}
<dx^i\otimes dx^k,dx^j\otimes dx^l>=\bar{h}^{ij}\bar{h}^{kl}.
\end{equation}
Note that $\det(\bar{h})(\mu+\tilde{\mu})>0 $ on $\mathcal{M},$ the integral defined by \eqref{inner product} is an inner product.

In what follows we will show the tensor $W=0$ by $(W,W)=0,$ where $W=W_{ij}dx^idx^j$ is the solution to (2.14)-(2.15), hence prove Theorem \ref{CV} and Theorem \ref{A}.
\begin{proof} A direct computation shows
\begin{eqnarray}\label{3.1}
&&(W,W)\nonumber\\
&=&\int_{\mathcal{M}}\frac{\det(\bar{h})}{\det(g)}\bar{h}^{ij}\bar{h}^{kl}W_{ik}W_{jl}(\mu+\tilde{\mu}) \nonumber\\
&=&\int_{\mathcal{M}}\frac{\det(\bar{h})}{\det(g)}\bar{h}^{ij}\bar{h}^{kl}(2\Phi_{i,k}+\bar{h}_{ik}(\mu-\tilde{\mu}))W_{jl} \nonumber\\
&=&\int_{\mathcal{M}}\frac{\det(\bar{h})}{\det(g)}\bar{h}^{ij}\bar{h}^{kl}2\Phi_{i,k}W_{jl}\nonumber\\
&=&2\int_{\partial\mathcal{M}}X\cdot\vec{\nu}dV_{\partial\mathcal{M}}-2\int_{\mathcal{M}}\Phi_{i}(\frac{\det(\bar{h})}{\det(g)}\bar{h}^{ij}\bar{h}^{kl}W_{jl})_{,k},
\end{eqnarray}
where $X=\det(\bar{h})\bar{h}^{ij}\bar{h}^{kl}\varphi_{i}W_{jl}\frac{\partial}{\partial x^k}$ and $\vec{\nu}$ is
outward normal along the $\partial{\mathcal{M}}.$ In the third equality we use $\bar{h}^{ij}W_{ij}=0,$ and the fourth equality is an application of divergence theorem.

For $i=1,$
\begin{eqnarray}\label{3.2}
&&(\det(\bar{h})\bar{h}^{ij}\bar{h}^{kl}w_{jl})_{,k}\nonumber\\
&=&(\bar{A}_{11}\bar{h}^{1l}W_{1l}+\bar{A}_{12}\bar{h}^{1l}W_{2l})_{,1}+(\bar{A}_{11}\bar{h}^{2l}w_{1l}+\bar{A}_{12}\bar{h}^{2l}W_{2l})_{,2}\nonumber\\
&=&(-\bar{A}_{11}\bar{h}^{2l}W_{2l}+\bar{A}_{12}\bar{h}^{1l}W_{2l})_{,1}+(\bar{A}_{11}\bar{h}^{2l}W_{1l}-\bar{A}_{12}\bar{h}^{1l}W_{1l})_{,2}\nonumber\\
&=&(-\bar{h}_{22}\bar{h}^{2l}W_{2l}-\bar{h}_{12}\bar{h}^{1l}W_{2l})_{,1}+(\bar{h}_{22}\bar{h}^{2l}W_{1l}+\bar{h}_{12}\bar{h}^{1l}W_{1l})_{,2}\nonumber\\
&=&-(\delta_{2}^{l}W_{2l})_{,1}+(\delta_{2}^{l}W_{1l})_{,2}\nonumber\\
&=&W_{21,2}-W_{22,1}\nonumber\\
&=&0,
\end{eqnarray}
where $\bar{A}_{ij}=\det(\bar{h})\bar{h}^{ij}$ is the cofactor of $\bar{h}$.
In the second equality and the last equality, we have used $\bar{h}^{ij}W_{ij}=0, W_{ij,k}=W_{ik,j}.$
Similarly, for $i=2,$ we also have $$(\det(\bar{h})\bar{h}^{ij}\bar{h}^{kl}W_{jl})_{,k}=0.$$
If $\mathcal{M}=\mathbb{S}^2,$ in the integral by parts the boundary term vanishes; if $\mathcal{M}$ is Alexandrov's annuli, on the boundary $W=0$ by Lemma \ref{HH} hence the boundary term vanishes too. Both of the two terms in \eqref{3.1} vanish,
$(W,W)=0,$ $W\equiv 0.$
\end{proof}

To prove Theorem \ref{CB} and Theorem \ref{Yau}, we introduce the following inner product,
for any two $(0,2)-$ symmetric tensors $\alpha=\alpha_{ik}dx^idx^k, \beta=\beta_{jl}dx^jdx^l$,
$$(\alpha,\beta)=\int_{\mathbb{S}^2}\frac{\det(h)}{\det(g)}h^{ij}h^{kl}\alpha_{ik}\beta_{jl}\mu dV_g.$$
In what follows we will show the tensor $w=0$ by $(w,w)=0,$ where $w=w_{ij}dx^idx^j$ is the solution to (2.21)-(2.22), hence prove Theorem \ref{CB}  and Theorem \ref{Yau}.
\begin{proof} A direct computation shows
\begin{eqnarray}
&&(w,w)\nonumber\\
&=&\int_{\mathcal{M}}\frac{\det(h)}{\det(g)}h^{ij}h^{kl}w_{ik}w_{jl}\mu \nonumber\\
&=&\int_{\mathcal{M}}\frac{\det(h)}{\det(g)}h^{ij}h^{kl}(\varphi_{i,k}+\frac{h_{ik}\nu}{\mu})w_{jl} \nonumber\\
&=&\int_{\mathcal{M}}\frac{\det(h)}{\det(g)}h^{ij}h^{kl}\varphi_{i,k}w_{jl}\nonumber\\
&=&\int_{\partial\mathcal{M}}X\cdot\vec{\nu}dV_{\partial\mathcal{M}},-\int_{\mathcal{M}}\varphi_{i}(\frac{\det(h)}{\det(g)}h^{ij}h^{kl}w_{jl})_{,k}
\end{eqnarray}
where $X=\det(h)h^{ij}h^{kl}\varphi_{i}w_{jl}\frac{\partial}{\partial x^k}$ and $\vec{\nu}$ is
outward normal along the $\partial{\mathcal{M}}.$

If $\mathcal{M}=\mathbb{S}^2,$  a similar argument in \eqref{3.2} yields $(w,w)=0,$ $w\equiv 0.$

If  $\mathcal{M}$ is Alexandrov's annuli, we have
\begin{eqnarray}\label{3.4}
(w,w)=\int_{\partial\mathcal{M}}X\cdot\vec{\nu}dV_{\partial\mathcal{M}},
\end{eqnarray}

Note the right hand side of \eqref{3.4} is invariance under coordinate change. So we use geodesic coordinates based on
$\partial{\mathcal{M}}.$ Without loss of generality, we merely consider the case of $\mathcal{M}$ is a disk, and then
$\partial{\mathcal{M}}$ is a planar curve denoted by $\sigma.$
On the boundary, we have $h_{11}=h_{12}=0$, $w_{11}=0$ and $\mu$ is constant.
\begin{eqnarray}\label{3.5}
&&\int_{\partial\mathcal{M}}X\cdot\vec{\nu}dV_{\partial\mathcal{M}}\nonumber\\
&=&\int_{\sigma}\det(h)h^{ij}h^{2l}\varphi_{i}w_{jl}ds\nonumber\\
&=&\int_{\sigma}\det(h)\varphi_{1}(h^{11}h^{22}w_{12}+h^{12}h^{21}w_{21}+h^{11}h^{21}w_{11}+h^{12}h^{22}w_{22})\nonumber\\
&+&\int_{\sigma}\det(h)\varphi_{2}(h^{21}h^{22}w_{12}+h^{22}h^{21}w_{21}+h^{21}h^{21}w_{11}+h^{22}h^{22}w_{22})\nonumber\\
&=&\int_{\sigma}\det(h)\varphi_{1}(h^{11}h^{22}w_{12}+h^{12}(h^{21}w_{21}+h^{11}w_{11}+h^{22}w_{22}))\nonumber\\
&=&\int_{\sigma}\det(h)\varphi_{1}(h^{11}h^{22}w_{12}-h^{12}h^{12}w_{21})\nonumber\\
&=&\int_{\sigma}\varphi_{1}w_{21},
\end{eqnarray}
where in the third equality we use the fact $h_{11}=h_{12}=0$, $w_{11}=0$ and in the fourth equality we use $h^{ij}w_{ij}=0.$

In what follows we will show
\begin{eqnarray}
\frac{1}{\mu}\oint_{\sigma}\varphi_{s}Fds\leq 0,\nonumber
\end{eqnarray}
where $F=w_{12}\mu.$

Recall on the boundary  $\sigma,$ $h_{11}=h_{12}=0,$  $w_{11}=0,$ and $\Gamma_{11}^{2}=-\Gamma_{12}^{1}=k_g,\Gamma_{11}^{1}=\Gamma_{12}^{2}=0.$ By \eqref{2.26},  we have on the boundary
\begin{eqnarray}
\left\{\begin{matrix}\label{3.6}
\varphi_{ss}&=&k_g\varphi_{t}\\
\varphi_{ts}&=&-k_g\varphi_{s}+F
\end{matrix}\right.,
\end{eqnarray}
which is nothing else but an ODE of $\varphi_s$ and $\varphi_t$.  We can rewrite \eqref{3.6} in complex form
\begin{eqnarray}
\frac{d}{ds}(\varphi_s+\sqrt{-1}\varphi_t)+\sqrt{-1}k_g(\varphi_s+\sqrt{-1}\varphi_t)=\sqrt{-1}F.\nonumber
\end{eqnarray}
For convenience, we introduce new variable $\theta=\int_{0}^{s}k_g\in [0,2\pi]$ and
let $c_1=\varphi_{s}(0), c_2=\varphi_{t}(0).$ Then the solution to \eqref{3.6} is
\begin{eqnarray}\label{3.7}
\varphi_s(\theta)=-\cos\theta(u(\theta)-c_1)+\sin\theta (v(\theta)+c_2),
\end{eqnarray}
where $f=\frac{F}{k_g}$ and
\begin{equation*}
u(\theta)=\int_{0}^{\theta}f(x)\sin x dx,v(\theta)=\int_{0}^{\theta}f(x)\cos x dx.
\end{equation*}

Suppose the boundary lies on the plane $z=0,$ by the motion of moving frame we have on the boundary
\begin{eqnarray}
\left\{\begin{matrix}\label{curve equation}
\vec{r}_{ss}&=&k_gr_{t}\\
\vec{r}_{ts}&=&-k_g\vec{r}_{s}
\end{matrix}\right..
\end{eqnarray}
It's easy to check
\begin{equation}\label{the tangent}
\begin{array}{ccc}
  \vec{r}_{s}(\theta) & = & \left( \begin{matrix}
   \cos(\theta+\alpha), & \sin(\theta+\alpha),  &0
\end{matrix} \right) \\
   \vec{r}_{t}(\theta) & = & \left( \begin{matrix}
  -\sin(\theta+\alpha), & \cos(\theta+\alpha),  &0
\end{matrix} \right)
\end{array},
\end{equation}
where $\alpha$ is a fixed constant.

In fact (2.4) follows from $\vec{r}_{s}(2\pi)=\vec{r}_{s}(0).$
Note that on $\sigma,$ $\vec{Y}_s=-w_{12}\vec{r}_s$ and $\mu$ is constant. By $\int_{\sigma}\vec{Y}_s=0,$
we have $u(2\pi)=v(2\pi)=0.$

Since $\oint_{\sigma}\varphi_sds=0,$
\begin{eqnarray}\label{3.8}
\oint_{\sigma}\varphi_sds&=&\int_{0}^{2\pi}\varphi_s(\theta)\frac{1}{k_g}d\theta\nonumber\\
&=&\int_{0}^{2\pi}(-\cos\theta u(\theta) +\sin\theta v(\theta))\frac{1}{k_g}d\theta\nonumber\\
&=&0,
\end{eqnarray}
where we use (2.3)-(2.4).

Hence
\begin{eqnarray}\label{3.9}
&&\oint_{\sigma}\varphi_{s}Fds\nonumber\\
&=&\int_{0}^{2\pi}-f\cos\theta(u(\theta)-c_1)+f\sin\theta (v(\theta)+c_2)d\theta\nonumber\\
&=&\int_{0}^{2\pi}-f\cos\theta u(\theta)+f\sin\theta v(\theta)d\theta\nonumber\\
&=&\int_{0}^{2\pi}-v'(\theta)u(\theta)+v(\theta)u'(\theta)d\theta\nonumber\\
&=&2\int_{0}^{2\pi}-v'(\theta)u(\theta).
\end{eqnarray}
We define a new closed planar curve $\Gamma$ by parameter equations
\begin{equation}\label{reference curve}
\begin{array}{ccc}
x_1(\theta)&=&\int_{0}^{\theta}\frac{\cos x}{k_g(x)}dx\\
x_2(\theta)&=&\int_{0}^{\theta}\frac{\sin x}{k_g(x)}dx\
\end{array}.
\end{equation}
A direct computation shows the curvature of $\Gamma$ is $k_g$ and the area bounded by the curve is
\begin{eqnarray}
S&=&-\oint_{\Gamma}x_2dx_1\nonumber\\
&=&\int_{0}^{2\pi}\frac{\cos\theta}{k_g(\theta)}\int_{0}^{\theta}\frac{\sin x}{k_g(x)}dx d\theta\nonumber\\
&>&0.\nonumber
\end{eqnarray}
And we introduce two new functions
\begin{eqnarray}
U(\theta)&=&u(\theta)+C\int_{0}^{\theta}\frac{\sin x}{k_g(x)}dx,\nonumber\\
V(\theta)&=&v(\theta)+C\int_{0}^{\theta}\frac{\cos x}{k_g(x)}dx,\nonumber\
\end{eqnarray}
where $$C=-\frac{u(\pi)}{\int_{0}^{\pi}\frac{\sin x}{k_g(x)}dx},$$ then we have $U'(\theta)\cot\theta=V'(\theta)$ and $U(0)=U(\pi)=0.$

Therefore
\begin{eqnarray}\label{3.10}
&&2\int_{0}^{2\pi}-V'(\theta)U(\theta)d\theta\nonumber\\
&=&2\int_{0}^{2\pi}-U(\theta)U'(\theta)\cot\theta d\theta\nonumber\\
&=&-\int_{0}^{2\pi}\sec^2\theta U^2(\theta)d\theta\nonumber\\
&\leq &0
\end{eqnarray}
and integral by parts yields
\begin{eqnarray}\label{3.11}
&&\int_{0}^{2\pi}-V'(\theta)U(\theta)d\theta\\
&=&\int_{0}^{2\pi}-(v'(\theta)+C\frac{\cos\theta}{k_g(\theta)})(u(\theta)+C\int_{0}^{\theta}\frac{\sin x}{k_g(x)}dx)d\theta\nonumber\\
&=&\int_{0}^{2\pi}-v'(\theta)u(\theta)d\theta+C\int_{0}^{2\pi}(v(\theta)\sin\theta-\cos\theta u(\theta))\frac{1}{k_g}d\theta
-C^2\oint_{\Gamma}x_2dx_1\nonumber\\
&=&\int_{0}^{2\pi}-v'(\theta)u(\theta)d\theta+C^2S,\nonumber
\end{eqnarray}
where in the third equality we use \eqref{3.8}.

Combing \eqref{3.9}-\eqref{3.11}, we have
\begin{eqnarray}\label{3.12}
\oint_{\sigma}\varphi_{s}Fds\leq 0,
\end{eqnarray}
and then
\begin{eqnarray}\label{3.13}
0\leq (w,w)\leq \int_{\sigma}\varphi_{1}w_{21}=\frac{1}{\mu}\oint_{\sigma}\varphi_{s}Fds\leq 0.
\end{eqnarray}
\end{proof}

In what follows we give another proof of Theorem \ref{CB} and Theorem \ref{Yau}. The proof is more geometric than above, correspondingly for Theorem \ref{Yau} we restrict that the component number of boundary of Alexandrov's positive annuli is $1$ (disk) or $2$ (annulus). We need the following lemma
\begin{lemm}\label{le10}
For any vector valued $\vec{E}:\mathcal{M}\mapsto \mathbb{R}^3$ satisfying
\begin{eqnarray}\label{3.14}
d\vec{r}\cdot d\vec{E}=0,
\end{eqnarray}
the $1-$ form defined on $\mathcal{M}$ $$\omega=d\vec{Y}\cdot\vec{E}$$ is closed.
\end{lemm}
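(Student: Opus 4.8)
The plan is to verify $d\omega=0$ by a direct coordinate computation, exploiting three structural facts: that $\vec Y$ is a genuine $\mathbb R^3$-valued function, so its mixed second derivatives commute; that $d\vec Y$ is tangential and has the special form $\vec Y_k=g^{ij}w_{ki}\,\vec n\times\vec r_j$ recorded just before \eqref{2.21}; and that the hypothesis \eqref{3.14} forces $\vec r_p\cdot\vec E_q$ to be skew-symmetric in $p,q$. Closedness is a purely local statement, so no global or topological hypothesis on $\mathcal M$ is needed.

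First I would write $\omega=\omega_k\,dx^k$ with $\omega_k=\vec Y_k\cdot\vec E$, where $\vec Y_k=\partial_k\vec Y$. Since $\vec Y$ takes values in $\mathbb R^3$ we have $d^2\vec Y=0$, and the Leibniz rule for the wedge of the vector-valued $1$-form $d\vec Y$ with the vector-valued function $\vec E$ gives
\[
d\omega=(d^2\vec Y)\cdot\vec E-d\vec Y\wedge d\vec E=-\,d\vec Y\wedge d\vec E=(\vec Y_2\cdot\vec E_1-\vec Y_1\cdot\vec E_2)\,dx^1\wedge dx^2,
\]
where the second-derivative term drops out because $\partial_1\partial_2\vec Y=\partial_2\partial_1\vec Y$. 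Thus the whole statement reduces to the pointwise identity $\vec Y_1\cdot\vec E_2=\vec Y_2\cdot\vec E_1$.

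Next I would use the structure of $\vec Y_k$. Because $\vec n\times\vec r_j$ lies in the tangent plane, each $\vec Y_k$ is tangential, so I may expand $\vec Y_k=Y_k^{\,p}\vec r_p$. Writing \eqref{3.14} as $\vec r_p\cdot\vec E_q+\vec r_q\cdot\vec E_p=0$, the quantity $J_{pq}:=\vec r_p\cdot\vec E_q$ is skew, hence in two variables $J_{11}=J_{22}=0$ and $J_{21}=-J_{12}$. Substituting and collecting terms yields
\[
\vec Y_1\cdot\vec E_2-\vec Y_2\cdot\vec E_1=(Y_1^{\,1}+Y_2^{\,2})\,J_{12}=\operatorname{tr}(Y)\,J_{12},
\]
so it remains only to show that the tangential endomorphism $Y=(Y_k^{\,p})$ is trace-free. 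Using the identity $\vec n\times\vec r_j=\sqrt{|g|}\,\varepsilon_{jl}g^{lp}\vec r_p$ (with $\varepsilon$ the Levi-Civita symbol), the coefficients are $Y_k^{\,p}=\sqrt{|g|}\,g^{ij}\varepsilon_{jl}g^{lp}w_{ki}$, and tracing $k=p$ gives $\operatorname{tr}(Y)=\sqrt{|g|}\,\varepsilon_{jl}T^{jl}$ with $T^{jl}=g^{ij}g^{lk}w_{ki}$. Since $w$ and $g$ are symmetric, $T^{jl}$ is symmetric in $j,l$, so its contraction against the skew symbol $\varepsilon_{jl}$ vanishes; conceptually, $d\vec Y$ is the composition of the symmetric tensor $w$ with the $\tfrac\pi2$-rotation $\vec n\times(\,\cdot\,)$ of the tangent plane, and the product of a symmetric $2\times 2$ matrix with a $90^\circ$ rotation is always trace-free. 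Hence $\operatorname{tr}(Y)=0$, the displayed difference vanishes, and $d\omega=0$.

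The main obstacle is exactly this trace-free identity: the remaining steps are bookkeeping, but here the symmetry of $w$ (equivalently the Codazzi-type data packaged into $\vec Y$) must be combined with the rotational nature of $\vec n\times(\,\cdot\,)$ in just the right way. The only other point requiring care is keeping the intermediate expansions genuinely coordinate-free, so that the vanishing of $d\omega$ is unaffected by the choice of local chart.
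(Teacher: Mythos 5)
Your proposal is correct and follows essentially the same route as the paper: both compute $d\omega=(\vec{Y}_2\cdot\vec{E}_1-\vec{Y}_1\cdot\vec{E}_2)\,dx^1\wedge dx^2$ via commuting second derivatives, use the hypothesis to make $\vec{r}_p\cdot\vec{E}_q$ skew, and then exploit the tangential form of $\vec{Y}_k$ together with the symmetry of $w_{ij}$. Your repackaging of the final cancellation as the trace-freeness of a symmetric tensor composed with the $90^\circ$ rotation $\vec{n}\times(\,\cdot\,)$ is just a coordinate-free restatement of the paper's explicit substitution of $\vec{Y}_1=\frac{1}{\sqrt{\det g}}(-w_{12}\vec{r}_1+w_{11}\vec{r}_2)$, $\vec{Y}_2=\frac{1}{\sqrt{\det g}}(-w_{22}\vec{r}_1+w_{21}\vec{r}_2)$ and the cancellation $w_{12}=w_{21}$.
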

\begin{proof}
It is obvious that $\omega$ is a one form. Exterior differentiation yields
\begin{eqnarray}
d\omega&=&\p_j(\vec{Y}_k\cdot \vec{E})dx^j\wedge dx^k\nonumber\\
&=&(\vec{Y}_{kj}\cdot \vec{E}+\vec{Y}_k\cdot\vec{E}_j)dx^j\wedge dx^k\nonumber\\
&=&((\vec{Y}_{21}-\vec{Y}_{12})\cdot \vec{E}+(\vec{Y}_2\cdot \vec{E}_1-\vec{Y}_1\cdot \vec{E}_2))dx^1\wedge dx^2\nonumber\\
&=&(\vec{Y}_2\cdot \vec{E}_1-\vec{Y}_1\cdot \vec{E}_2)dx^1\wedge dx^2.\nonumber
\end{eqnarray}

By \eqref{3.14}, we have
\begin{eqnarray}
\left\{\begin{matrix}\label{3.15}
\vec{r}_1\cdot E_1&=&0\\
\vec{r}_2\cdot \vec{E}_2&=&0\\
\vec{r}_1\cdot \vec{E}_2+\vec{r}_2\cdot \vec{E}_1&=&0
\end{matrix}\right..
\end{eqnarray}
We can rewrite $d\vec{Y}$ as
\begin{eqnarray}
\left\{\begin{matrix}\label{3.16}
\vec{Y}_1&=&\frac{1}{\sqrt{\det g}}(-w_{12}\vec{r}_1+w_{11}\vec{r}_2)\\
\vec{Y}_2&=&\frac{1}{\sqrt{\det g}}(-w_{22}\vec{r}_1+w_{21}\vec{r}_2)
\end{matrix}\right..
\end{eqnarray}
Hence by \eqref{3.16} we get,
$$\vec{Y}_2\cdot \vec{E}_1-\vec{Y}_1\cdot \vec{E}_2=\frac{w_{21}}{\sqrt{\det g}}\vec{r}_2\cdot \vec{E}_1+\frac{w_{12}}{\sqrt{\det g}}\vec{r}_1\cdot \vec{E}_2=0,$$ $\omega$ is a closed one form.
\end{proof}

Case $1:$ $\mathcal{M}$ be a disk $D$ called Alexandrov's positive disk, $\vec{k}$ be the normal along the boundary $\sigma,$ and $\vec{i},\vec{j}$ and $\vec{k}$ form an orthogonal basis.
Assume $\vec{r}\cdot\vec{k}=0$ on the boundary $\sigma,$ and $\vec{r}\cdot\vec{k}>0$ at the interior points, we have
$$\vec{E}=\vec{k} \text{  or  } \vec{E}=\vec{i}\times\vec{r} \text{ or }\vec{E}=\vec{j}\times\vec{r}$$
satisfy \eqref{3.11}. Since $\vec{r}=g^{ij}\rho_i\vec{r}_j+\mu\vec{n}=g^{ij}\rho_i\vec{r}_j \perp \vec{n}$ on $\sigma,$ and $\vec{i}\perp \vec{n},\vec{j}\perp \vec{n}$, we have $\vec{E}$ is parallel $\vec{n}=\vec{k}$ and then $\omega=d\vec{Y}\cdot\vec{E}=0$ on $\sigma.$

For convenience, we write
$$\vec{Y}_k=a_k^l\vec{r}_l,$$ where $a_k^l$ is a $(1,1)$ tensor. The relationship between $w_{ij}$ and $a_k^l$ is released in \eqref{3.16}.
Since the first de Rham cohomology of disk is trivial, i.e. $H^1_{DR}(D)=0$, there exists some smooth function $\psi$ defined on the disk, such that
$$\omega=d\psi=\psi_kdx^k.$$ Hence we have,
\begin{eqnarray}\label{3.17}
\psi_k=\vec{Y}_k\cdot \vec{E}=a^l_k\vec{r}_l\cdot\vec{E}.
\end{eqnarray}
We will show $\psi$ is constant hence $\omega=0,$ which is one key step to prove Theorem \ref{Yau}.

It's worth pointing out that  the following idea is borrowed from \cite{GWZ} which proves the rigidity in prescribed curvature problem.
\begin{proof}
A simple computation shows
$$\psi_{k,j}=a^l_{k,j}\vec{r}_l\cdot \vec{E}+a_k^lh_{jl}\vec{n}\cdot \vec{E}+a_k^l\vec{r}_l\cdot \vec{E}_j.$$
Then
\begin{eqnarray}\label{3.18}
h^{kj}\psi_{k,j}&=&h^{kj}a^l_{k,j}\vec{r}_l\cdot \vec{E}+a^k_k\vec{n}\cdot \vec{E}+h^{kj}a_k^l\vec{r}_l\cdot \vec{E}_j\\
&=&h^{kj}a^l_{k,j}\vec{r}_l\cdot \vec{E}+(\frac{-w_{12}}{\sqrt{\det g}}+\frac{w_{21}}{\sqrt{\det g}})\vec{n}\cdot \vec{E}\nonumber\\
&&+h^{1k}a_k^2\vec{r}_2\cdot \vec{E}_1+h^{2k}a_k^1\vec{r}_1\cdot \vec{E}_2\nonumber\\
&=&h^{kj}a^l_{k,j}\vec{r}_l\cdot \vec{E}+(h^{1k}a_k^2-h^{2k}a_k^1)\vec{r}_2\cdot \vec{E}_1\nonumber\\
&=&h^{kj}a^l_{k,j}\vec{r}_l\cdot \vec{E}+\frac{h^{ij}w_{ij}}{\sqrt{\det g}}\vec{r}_2\cdot \vec{E}_1\nonumber\\
&=&h^{kj}a^l_{k,j}\vec{r}_l\cdot \vec{E}\nonumber.
\end{eqnarray}
By the Lemma 4 in \cite{GWZ}, we have,
$$(a^1_1)^2+(a^1_2)^2+(a^2_1)^2+(a^2_2)^2\leq -C\det(a_{i}^{j}).$$
We conclude that,
$$h^{kj}\psi_{k,j}=h^{kj}a^l_{k,j}\frac{B^m_l \psi_m}{\det a}.$$ Here $B^m_l$ is the cofactor of $a^m_l$.
We also have for $l=1$,
\begin{eqnarray}
h^{ij}a^1_{i,j}
&=&h^{11}a^1_{1,1}+h^{12}a^1_{1,2}+h^{21}a^1_{2,1}+h^{22}a^1_{2,2}\nonumber\\
&=&\frac{1}{\sqrt{\det g}}(-h^{11}w_{12,1}-h^{12}w_{12,2}-h^{21}w_{22,1}-h^{22}w_{22,2})\nonumber\\
&=&-\frac{1}{\sqrt{\det g}}(h^{11}w_{11,2}+h^{12}w_{12,2}+h^{21}w_{21,2}+h^{22}w_{22,2})\nonumber\\
&=&-\frac{1}{\sqrt{\det g}}h^{ij}w_{ij,2}=\frac{1}{\sqrt{\det g}}h^{ij}_{,2}w_{ij}\nonumber.
\end{eqnarray}
Similarly, we have,
$$h^{ij}a^2_{i,j}=\frac{1}{\sqrt{\det g}}h^{ij}_{,1}w_{ij}.$$
Hopf's strong maximum principle (seen in $\S 3.2$ Theorem 3.5 of \cite{GT}) tells us
$\psi$ is a constant function on the disk since on the boundary $\psi$ is a constant, hence
$$\omega=d\psi=0.$$

Let $$S=\{x|x\in \bar{D},\vec{n}\cdot\vec{k}=\pm 1, \text{ or } \vec{r}\cdot\vec{k}=0\},$$
we have in $D\setminus S,$ at least one of the following mixed products is nonzero
\begin{eqnarray}
\left\{\begin{matrix}
(\vec{i}\times\vec{r},\vec{k},\vec{n})&=&-(\vec{r}\cdot\vec{k})(\vec{n}\cdot\vec{i})\\
(\vec{j}\times\vec{r},\vec{k},\vec{n})&=&-(\vec{r}\cdot\vec{k})(\vec{n}\cdot\vec{j})
\end{matrix}\right..
\end{eqnarray}
Recall that
$$\vec{E}=\vec{k} \text{  or  } \vec{E}=\vec{i}\times\vec{r} \text{ or }\vec{E}=\vec{j}\times\vec{r},$$
since $\omega=d\vec{Y}\cdot\vec{E}$ and $d\vec{Y}\cdot\vec{n}=0,$ $d\vec{Y}=0$ in $D\setminus S.$ Note that $S$ is zero measured, by the continuity  $d\vec{Y}=0$ in $D.$
\end{proof}

Case $2:$  $\mathcal{M}$ is Alexandrov's positive annulus.
Lemma \ref{planar boundary} says the boundary consists of two planar curves . We will discuss
two different case respectively: Subcase $2.1$: the two boundary planes are parallel; Subcase $2.2$: the two
boundary planes are not parallel.

Different from case $1,$ we need some extra topology preliminary
\begin{lemm}\label{homology}
If $\vec{E}=\vec{a}\times r+\vec{b}$ for any constant $\vec{a}$ and $\vec{b}$ which is the trivial solution to \eqref{1.2}, we have
for any component of boundary $\sigma_k,1\leq k\leq m$
\begin{eqnarray}\label{3.11}
\oint_{\sigma_k}\omega=\oint_{\sigma_k}d\vec{Y}\cdot\vec{E}=0,
\end{eqnarray}
hence there exists  some smooth function $\psi$ defined on the $\mathcal{M},$ such that
$$\omega=d\psi.$$
\end{lemm}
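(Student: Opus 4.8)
The plan is to establish three things in turn: that $\omega$ is closed, that its period around each boundary component vanishes, and that vanishing periods force exactness. The first is immediate from the machinery already in place. Since $\vec{E}=\vec{a}\times\vec{r}+\vec{b}$ is a trivial solution of the linearized equation \eqref{1.2}, it satisfies $d\vec{r}\cdot d\vec{E}=0$, so Lemma \ref{le10} applies and $\omega=d\vec{Y}\cdot\vec{E}$ is a closed $1$-form on $\mathcal{M}$. The real content is the computation of the periods $\oint_{\sigma_k}\omega$.

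To compute a period, I would split $\omega$ according to the two constant vectors. The translation part contributes $\vec{b}\cdot\oint_{\sigma_k}d\vec{Y}$, which vanishes because $\vec{Y}$ is a single-valued vector field on $\mathcal{M}$ and $\sigma_k$ is a closed loop. For the rotation part I would apply the scalar triple product identity $d\vec{Y}\cdot(\vec{a}\times\vec{r})=\vec{a}\cdot(\vec{r}\times d\vec{Y})$ together with the Leibniz rule and \eqref{2.20}, writing
$$\vec{r}\times d\vec{Y}=d(\vec{r}\times\vec{Y})+\vec{Y}\times d\vec{r}=d(\vec{r}\times\vec{Y})+d\vec{\tau}.$$
Both summands are exact differentials of globally defined quantities: $\vec{r}\times\vec{Y}$ and the deformation field $\vec{\tau}$ are single-valued on $\mathcal{M}$. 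Hence their integrals around the closed curve $\sigma_k$ vanish, giving $\oint_{\sigma_k}\omega=\vec{a}\cdot\oint_{\sigma_k}(\vec{r}\times d\vec{Y})=0$ for every $k$.

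Finally I would pass from vanishing periods to exactness. Since $\mathcal{M}$ is of genus zero in the relevant cases (a disk or an annulus), its first homology is generated by the boundary loops $\sigma_1,\dots,\sigma_m$, and a closed $1$-form whose integral over each generator vanishes is exact by de Rham's theorem. Therefore $\omega=d\psi$ for some smooth function $\psi$ on $\mathcal{M}$, as claimed.

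I expect the main subtlety to be precisely the single-valuedness exploited in the period computation. On the multiply-connected $\mathcal{M}$ neither $\vec{\tau}$ nor $\vec{Y}$ need produce exact forms on their own, and it is exactly the trivial form $\vec{E}=\vec{a}\times\vec{r}+\vec{b}$ that allows me to re-express $\omega$ entirely through differentials of the globally defined fields $\vec{Y}$, $\vec{r}\times\vec{Y}$ and $\vec{\tau}$. For a general, nontrivial solution of \eqref{1.2} this reduction breaks down and the periods can fail to vanish, which is why the hypothesis that $\vec{E}$ is a trivial infinitesimal isometry is essential.
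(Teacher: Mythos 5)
Your proposal is correct and follows essentially the same route as the paper: both split $\omega$ into the translation part (killed by $\oint_{\sigma_k} d\vec{Y}=0$) and the rotation part, which after an integration by parts becomes $\vec{a}\cdot\oint_{\sigma_k}\vec{Y}\times d\vec{r}=\vec{a}\cdot\oint_{\sigma_k}d\vec{\tau}=0$ via \eqref{2.20}, with exactness then following from the vanishing of all boundary periods. Your write-up merely makes explicit two things the paper leaves implicit, namely the exact term $d(\vec{r}\times\vec{Y})$ absorbed in the integration by parts and the de Rham argument that boundary loops generate $H_1(\mathcal{M})$.
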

\begin{proof}
Integral by parts yields
\begin{eqnarray}
 \oint_{\sigma_k}d\vec{Y}\cdot\vec{E}
 &=&\oint_{\sigma_k}d\vec{Y}\cdot (\vec{a}\times r+\vec{b})\nonumber\\
 &=&\vec{a}\cdot \oint_{\sigma_k}\vec{Y}\times d\vec{r}+\vec{b}\cdot\oint_{\sigma_k}d\vec{Y}\nonumber\\
 &=&\vec{a}\cdot \oint_{\sigma_k}d\vec{\tau}+\vec{b}\cdot\oint_{\sigma_k}d\vec{Y}\nonumber\\
 &=&0,
 \end{eqnarray}
 where we use \eqref{2.20}.
 \end{proof}

 For Subcase $2.1,$ let $\vec{k}$ be a unit vector in $\mathbb{R}^3$ which is parallel to the normals of the two boundary
planes and choose $\vec{E}=\vec{k},$ then $\omega=d\vec{Y}\cdot \vec{k}=0$ on $\p \mathcal{M}.$ In particular the normal derivative $\frac{\p \psi}{\p \vec{\nu}}=0.$
Similar with Case $1,$ by maximum principle on Neumann problem (seen in $\S 3.2$ Theorem 3.6 of \cite{GT}) $\psi$ is constant. Hence $d\psi=\vec{Y}_i\cdot\vec{k}dx^i=0$
\begin{eqnarray}\label{3.13}
\left(\begin{matrix}
a^{1}_{1}&a^{2}_{1}\\
a^{1}_{2}&a^{2}_{2}
\end{matrix}\right)
\left(\begin{matrix}
\vec{r}_{1}\cdot \vec{k}\\
\vec{r}_{2}\cdot \vec{k}
\end{matrix}\right)=\left(\begin{matrix}
0\\
0
\end{matrix}\right).
\end{eqnarray}
Note that on $\mathcal{M}$ at least one of $\vec{r}_{1}\cdot \vec{k},\vec{r}_{2}\cdot \vec{k}$ is not zero otherwise  $\vec{k}$ is parallel to some normal on $\mathcal{M},$ but as a convex surface, its Gauss map is one-to-one and  any normal on $\mathcal{M}$  differs from the normals on $\p \mathcal{M}$ therefore isn't parallel to $\vec{k}.$
Hence the coefficient determinant $\det(a^{i}_{j})=\frac{\det(w_{ij})}{\det(g)}=0,$ i.e. $\det(w_{ij})=\det(w)=0.$  \eqref{2.21} says $\mathrm{tr}_{h}(w_{ij})=\mathrm{tr}(h^{-1}w)=0,$ in addition
$\det(h^{-1}w)=\frac{\det(w)}{\det(h)}=0$,  then $h^{-1}w=0$ and $w=0$ because $h$ and $w$ are symmetric, i.e. $d\vec{Y}=0.$

 For Subcase $2.2,$ let the constant normals on $\sigma_1,\sigma_2$ be $\vec{n}(\sigma_1),\vec{n}(\sigma_2),$ and the constant support functions on
 $\sigma_1,\sigma_2$ be $\mu(\sigma_1),\mu(\sigma_2),$ we choose $\vec{E}$ as
 \begin{eqnarray}\label{3.14}
 \vec{E}=(\vec{n}(\sigma_1)\times \vec{n}(\sigma_2))\times (\vec{r}+c_1\vec{n}(\sigma_1)+c_2\vec{n}(\sigma_2))
 \end{eqnarray}
 where $c_1,c_2$ solves
 \begin{eqnarray}\label{3.15}
\left(\begin{matrix}
1& \vec{n}(\sigma_1)\cdot\vec{n}(\sigma_2)\\
\vec{n}(\sigma_1)\cdot\vec{n}(\sigma_2)&1
\end{matrix}\right)
\left(\begin{matrix}
c_1\\
c_2
\end{matrix}\right)=-\left(\begin{matrix}
\mu(\sigma_1)\\
\mu(\sigma_2)
\end{matrix}\right).
\end{eqnarray}
Since $\vec{n}(\sigma_1),\vec{n}(\sigma_2)$ are not parallel, the coefficient matrix in algebraic equation \eqref{3.15} is invertible and thereby
\eqref{3.15} is solvable.

Note that $\vec{r}=g^{ij}\rho_i\vec{r}_j+\mu\vec{n}$,  it's easy to check that on $\p\mathcal{M}=\cup_{k=1}^{2}\sigma_k,$ such $\vec{E}$ is parallel to normal.
 Then $\omega=d\vec{Y}\cdot \vec{E}=0$ on $\p \mathcal{M}.$

Similar to Subcase $2.1,$ if at least one of $\vec{r}_{1}\cdot \vec{E},\vec{r}_{2}\cdot \vec{E}$ is not zero, the tensor $w=w_{ij}dx^idx^j=0.$
We will see the set
\begin{eqnarray}\label{3.16}
S_p:=\{p\in \mathcal{M}, \vec{r}_{1}\cdot \vec{E}=0,\vec{r}_{2}\cdot \vec{E}=0\}
\end{eqnarray}
is of zero measure. If so, by the continuity $w=0$ at any point on $\mathcal{M}.$

Let $\vec{X}=\vec{r}+c_1\vec{n}(\sigma_1)+c_2\vec{n}(\sigma_2),$ define
\begin{eqnarray}\label{3.17}
\varphi_{\mathcal{M}}(p)=\vec{n}\cdot\vec{X},  p\in  \mathcal{M},
\end{eqnarray}
 we have $S_p$ is contained in the level set $\{p\in  \mathcal{M},\varphi_{\mathcal{M}}(p)=0\}$ since $\vec{n}$ is parallel to $\vec{E}=(\vec{n}(\sigma_1)\times \vec{n}(\sigma_2))\times\vec{X}$ on $S_p.$ We will check on
 the level set,  $\nabla\varphi_{\mathcal{M}}\neq 0$ if $\vec{X}\neq 0.$
 \begin{eqnarray}\label{3.18}
 \partial_i\varphi_{\mathcal{M}}&=&\vec{r}_i\cdot\vec{n}+\vec{X}\cdot \partial_i\vec{n}\nonumber\\
 &=&-\vec{X}\cdot h^{l}_{i}\vec{r}_l.
 \end{eqnarray}
Let $\vec{X}=a^{j}\vec{r}_j $ since on the level set $\vec{n}\cdot\vec{X}=0,$  if $\nabla\varphi_{\mathcal{M}}= 0,$ we have

 \begin{eqnarray}\label{3.19}
\left(\begin{matrix}
h^{1}_{1}&h^{2}_{1}\\
h^{1}_{2}&h^{2}_{2}
\end{matrix}\right)
\left(\begin{matrix}
a^1\\
a^2
\end{matrix}\right)=\left(\begin{matrix}
0\\
0
\end{matrix}\right),
\end{eqnarray}
hence $a^j=0$ and $\vec{X}=0.$  $\vec{r}$ is regular surface and  the translation $\vec{X}$ is regular too, then the $\{p\in \mathcal{M},\vec{X}(p)=0\}$ is finite. The level set $\{p\in  \mathcal{M},\varphi_{\mathcal{M}}(p)=0\}$ is zero measured, of course as its subset $S_p$ is too.

\begin{rema}
If $\mathcal{M}=\mathbb{S}^2,$ i.e. the case of closed convex surface, we choose
$$\vec{E}=\vec{k} \text{  or  } \vec{E}=\vec{i} \text{ or }\vec{E}=\vec{j},$$
similar but simpler argument yields $d\vec{Y}=0.$ Thus we complete the proof of Theorem \ref{CB}.
\end{rema}

As seen, the new proofs we give highlight the roles that the function $\rho$ defined in \eqref{2.5} and its linearized version $\varphi$ defined in \eqref{2.23} play. In fact we can extract all information from $\rho$ which satisfies Darboux equation in isometric embedding problem  as  we work on the support function in Minkowski problem.

\section{The rigidity of hypersurfaces in $\mathbb{R}^{n+1}, n\geq 3$ }

   Similarly in the case of higher dimension, for the equation \eqref{1.2}  we can assume that $$d\vec{\tau}=\vec{Y}\times d\vec{r},$$ for some vector $\vec{Y}\in G_{r}(n-1,n+1)\cong G_{r}(2,n+1)$, where $G_{r}(r,n+1)$ is Grassmannian.

   Let $$d\vec{r}=\vec{r}_jdx^j, d\vec{Y}=\vec{Y}_idx^i,1\leq i,j\leq n $$ and
    $$\vec{Y}_i=W_{i}^{\alpha\beta}e_{\alpha}\wedge e_{\beta},1\leq \alpha,\beta\leq n+1,$$ where $\vec{r}_{n+1}$ is the normal vector, and the basis $e_{\alpha}\wedge e_{\beta}$ in
   $G_{r}(2,n+1)$
    is defined by
\begin{eqnarray}\label{5.1}
e_{\alpha}\wedge e_{\beta}=\frac{1}{(n-1)!}\delta_{k_1k_2\cdots k_{n-1}\alpha\beta}^{12\cdots(n-1)n(n+1)}\vec{r}_{k_1}\wedge\vec{r}_{k_2}\wedge\cdots\wedge \vec{r}_{k_{n-1}},
\end{eqnarray}
where $\delta$ is generalized Kronecker symbol.
Obviously $e_{\alpha}\wedge e_{\beta}=-e_{\beta}\wedge e_{\alpha}$, we set
\begin{eqnarray}
W_{i}^{\alpha\beta}=-W_{i}^{\beta\alpha}.
\end{eqnarray}
 By $$d\vec{Y}\wedge d\vec{r}=0,$$
we have
\begin{eqnarray}
W_{i}^{\alpha\beta}e_{\alpha}\wedge e_{\beta}\wedge\vec{r}_j dx^{i}\wedge dx^{j}=0,
\end{eqnarray}
i.e.
\begin{eqnarray}
\frac{1}{(n-1)!}W_{i}^{\alpha\beta}\delta_{k_1k_2\cdots k_{n-1}\alpha\beta}^{12\cdots(n-1)n(n+1)}\vec{r}_{k_1}\wedge \vec{r}_{k_2}\wedge\cdots\wedge \vec{r}_{k_{n-1}}\wedge\vec{r}_j dx^{i}\wedge dx^{j}=0.
\end{eqnarray}
Define a basis $E_\gamma,1\leq \gamma\leq n+1$ in $G_{r}(n,n+1)\cong G_{r}(1,n+1)$ by
\begin{eqnarray}
\vec{r}_{k_1}\wedge \vec{r}_{k_2}\wedge\cdots\wedge \vec{r}_{k_{n-1}}\wedge \vec{r}_{j}=\delta_{k_1k_2\cdots k_{n-1}j\gamma}^{12\cdots(n-1) n(n+1)}E_\gamma,
\end{eqnarray}
hence
\begin{eqnarray}
&&\frac{1}{(n-1)!}W_{i}^{\alpha\beta}\delta_{k_1k_2\cdots k_{n-1}\alpha\beta}^{12\cdots(n-1)n(n+1)}\delta_{k_1k_2\cdots k_{n-1}j\gamma}^{12\cdots(n-1) n(n+1)}E_\gamma dx^{i}\wedge dx^{j}\\
&=&\frac{1}{(n-1)!}W_{i}^{\alpha\beta}\delta_{\alpha\beta}^{j\gamma}E_\gamma dx^{i}\wedge dx^{j}\nonumber\\
&=&0,\nonumber
\end{eqnarray}
i.e.
for fixed $i,j$ and $\gamma,$
\begin{eqnarray}
W_{i}^{\alpha\beta}\delta_{\alpha\beta}^{j\gamma}-W_{j}^{\alpha\beta}\delta_{\alpha\beta}^{i\gamma}=0,
\end{eqnarray}
hence
\begin{eqnarray}
W_{i}^{j\gamma}=W_{j}^{i\gamma}.
\end{eqnarray}

We claim
\begin{lemm}
if $1\leq i,j,\gamma\leq n$, then
\begin{eqnarray}
W_{i}^{j\gamma}=0.
\end{eqnarray}
\end{lemm}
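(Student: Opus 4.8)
The plan is to prove the lemma by pure index algebra, invoking only the two symmetry relations already established for the coefficients $W_i^{\alpha\beta}$. The first is the defining antisymmetry of the Grassmann basis $e_\alpha\wedge e_\beta=-e_\beta\wedge e_\alpha$, which gives $W_i^{\alpha\beta}=-W_i^{\beta\alpha}$ and in particular, for tangential indices,
$$W_i^{j\gamma}=-W_i^{\gamma j}.$$
The second is the integrability relation $W_i^{j\gamma}=W_j^{i\gamma}$ derived above from $d\vec{Y}\wedge d\vec{r}=0$; it permits interchanging the lower index with the first upper index whenever both lie in the range $1,\dots,n$. Thus, restricted to the tangential block, $W_i^{j\gamma}$ is symmetric in the pair (lower index, first upper index) and antisymmetric in the pair (first upper index, second upper index).

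The key step is the classical observation that a three-slot symbol which is symmetric in one adjacent pair and antisymmetric in the overlapping adjacent pair must vanish. Concretely, I would alternate the two relations six times, the odd steps applying the symmetry $W_a^{bc}=W_b^{ac}$ and the even steps the antisymmetry $W_a^{bc}=-W_a^{cb}$:
\begin{align}
W_i^{j\gamma}
&= W_j^{i\gamma} = -W_j^{\gamma i} = -W_\gamma^{j i} \nonumber\\
&= W_\gamma^{i j} = W_i^{\gamma j} = -W_i^{j\gamma}. \nonumber
\end{align}
The chain returns $W_i^{j\gamma}$ to its own negative, so $2W_i^{j\gamma}=0$ and the claim $W_i^{j\gamma}=0$ follows at once.

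The single point requiring care — and the only place the hypothesis $1\le i,j,\gamma\le n$ is used — is that each interchange of the lower index with the first upper index must stay inside the regime where $W_a^{bc}=W_b^{ac}$ holds, namely where both of those two slots are tangential. Since none of $i,j,\gamma$ equals the normal index $n+1$, every intermediate symbol in the chain has all three slots tangential, so the symmetry applies at each odd step without obstruction; in particular the third and fifth steps, which swap against $\gamma$, are exactly what force the hypothesis $\gamma\le n$. Were one of the indices equal to $n+1$ the chain would break, which is precisely why the lemma is confined to the tangential block and the mixed components $W_i^{j,n+1}$ are left to carry the genuine second-fundamental-form information.
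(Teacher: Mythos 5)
Your proof is correct and is essentially the paper's own argument: both use only the antisymmetry $W_i^{\alpha\beta}=-W_i^{\beta\alpha}$ and the symmetry $W_i^{j\gamma}=W_j^{i\gamma}$, alternated until $W_i^{j\gamma}$ returns as its own negative (the paper merely organizes the same six exchanges as two chains meeting at $W_\gamma^{ji}=-W_\gamma^{ji}$ rather than as one cycle). Your remark that the hypothesis $\gamma\le n$ is exactly what licenses the swaps against $\gamma$ is also the correct, and in the paper implicit, accounting of where tangentiality is used.
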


For the left hand side of (4.9), by (4.2),(4.8)
\begin{eqnarray}
W_{i}^{j\gamma}&=&-W_{i}^{\gamma j}\\
&=&-W_{\gamma}^{ij}\nonumber\\
&=&W_{\gamma}^{ji}.\nonumber
\end{eqnarray}

And on the other hand, for the right hand side of (4.9), by (2,2) and (4.8)
\begin{eqnarray}
W_{j}^{i\gamma}&=&-W_{j}^{\gamma i}\\
&=&-W_{\gamma}^{ji},\nonumber
\end{eqnarray}
so
$$W_{i}^{j\gamma}=-W_{i}^{j\gamma}.$$

Hence we can rewrite $\vec{Y}_i$
\begin{eqnarray}
\vec{Y}_i=2W_{i}^{l(n+1)}e_{l}\wedge e_{n+1}.
\end{eqnarray}

At the same time note that for fixed $i,j$
\begin{eqnarray}
\vec{Y}_i\wedge\vec{r}_j&=&2W_{i}^{l(n+1)}\frac{1}{(n-1)!}\delta_{k_1k_2\cdots k_{n-1}l(n+1)}^{12\cdots(n-1)n(n+1)}\vec{r}_{k_1}\wedge\vec {r}_{k_2}\wedge\cdots\wedge\vec{r}_{k_{n-1}}\wedge \vec{r}_j\\
&=&2W_{i}^{j(n+1)}\frac{1}{(n-1)!}\delta_{k_1k_2\cdots k_{n-1}j(n+1)}^{12\cdots(n-1)n(n+1)}\vec{r}_{k_1}\wedge \vec{r}_{k_2}\wedge\cdots\wedge\vec{r}_{k_{n-1}}\wedge \vec{r}_j\nonumber\\
&=&2W_{i}^{j(n+1)}\sqrt{|g|}\vec{r}_{n+1},\nonumber
\end{eqnarray}
hence let $w_{ij}=2W_{i}^{j(n+1)}\sqrt{|g|}$, then the quadratic form $w_{ij}dx^{i}dx^{j}$ is globally well defined.

And we rewrite (4.12) as
\begin{eqnarray}
\vec{Y}_i=w_{il}\frac{1}{\sqrt{|g|}}e_{l}\wedge e_{n+1}.
\end{eqnarray}

In what follows we will compute the covariant derivative of $e_l\wedge e_{n+1}$.

At first we notice that
\begin{eqnarray}
e_l\wedge e_{n+1}=\sum_{k_1<k_2<\cdots<k_{n-1}}^{k_1,k_2,\cdots,k_{n-1}\neq l,n+1}\delta_{k_1k_2\cdots k_{n-1}l(n+1)}^{12\cdots(n-1)n(n+1)}\vec{r}_{k_1}\wedge \vec{r}_{k_2}\wedge\cdots\wedge \vec{r}_{k_{n-1}},
\end{eqnarray}
therefore
\begin{eqnarray}
&&(e_l\wedge e_{n+1})_{j}\\
&=&\sum_{k_1<k_2<\cdots<k_{n-1}}^{k_1,k_2,\cdots,k_{n-1}\neq l,n+1}\delta_{k_1k_2\cdots k_{n-1}l(n+1)}^{12\cdots(n-1)n(n+1)}(h_{k_1,j} \vec{r}_{n+1}\wedge \vec{r}_{k_2}\wedge\cdots\wedge \vec{r}_{k_{n-1}}\nonumber\\
&+&h_{k_2,j}\vec{r}_{k_1}\wedge \vec{r}_{n+1}\wedge\cdots\wedge \vec{r}_{k_{n-1}}
+\cdots+h_{k_{n-1},j}\vec{r}_{k_1}\wedge \vec{r}_{k_2}\wedge\cdots\wedge \vec{r}_{n+1}).\nonumber
\end{eqnarray}
Since
\begin{eqnarray}
&& \delta_{k_1k_2\cdots k_{n-1}l(n+1)}^{12\cdots(n-1)n(n+1)} \vec{r}_{n+1}\wedge \vec{r}_{k_2}\wedge\cdots\wedge \vec{r}_{k_{n-1}}\nonumber\\
&=&-\delta_{(n+1)k_2\cdots k_{n-1}lk_1}^{12\cdots(n-1)n(n+1)}\vec{r}_{n+1}\wedge \vec{r}_{k_2}\wedge\cdots\wedge \vec{r}_{k_{n-1}}\nonumber\\
&=&-\delta_{k_2\cdots k_{n-1}(n+1)lk_1}^{12\cdots(n-1)n(n+1)} \vec{r}_{k_2}\wedge\cdots\wedge \vec{r}_{k_{n-1}}\wedge \vec{r}_{n+1}\nonumber
\end{eqnarray}
and for $k_2<k_3<\cdots<k_{n-1}<n+1, k_2,k_3,k_{n-1},n+1\neq l,k_1,$ hence
\begin{eqnarray}
\delta_{k_1k_2\cdots k_{n-1}l(n+1)}^{12\cdots(n-1)n(n+1)} \vec{r}_{n+1}\wedge \vec{r}_{k_2}\wedge\cdots\wedge \vec{r}_{k_{n-1}}
=-e_{l}\wedge e_{k_1}.
\end{eqnarray}
Similarly we have
\begin{eqnarray}
(e_l\wedge e_{n+1})_{j}=\sum_{k\neq l,n+1}h_{kj}e_k\wedge e_l.
\end{eqnarray}
Thus
\begin{eqnarray}
\vec{Y}_{i,j}&=&\frac{1}{\sqrt{|g|}}(w_{il,j}e_l\wedge e_{n+1}+w_{il}\sum_{k\neq l,n+1}h_{kj}e_k\wedge e_l),\\
\vec{Y}_{j,i}&=&\frac{1}{\sqrt{|g|}}(w_{jl,i}e_l\wedge e_{n+1}+w_{jl}\sum_{k\neq l,n+1}h_{ki}e_k\wedge e_l).\nonumber
\end{eqnarray}
By compatibility $\vec{Y}_{i,j}=\vec{Y}_{j,i}$, we have
\begin{eqnarray}
&&w_{il,j}=w_{jl,i},\\
&&h_{kj}w_{il}-h_{lj}w_{ik}=h_{ki}w_{jl}-h_{li}w_{jk}.
\end{eqnarray}
\begin{rema}
(4.20) shows that $w_{ij}$ is Codazzi. In fact,(4.20)-(4.21) is  homogeneous linearized Gauss-Codazzi system.
\end{rema}
Similarly with the case of $n=2$,
\begin{eqnarray}
h^{ij}\vec{Y}_{i,j}=\frac{h^{ij}}{\sqrt{|g|}}w_{il,j}e_l\wedge e_{n+1},
\end{eqnarray}

hence for hypersurface in $\mathbb{R}^{n+1}$, we can use maximal principle to get the infinitesimal rigidity.
But we can make use of (4.21) to reprove Theorem \ref{DR}.

\begin{proof}

We want to show $w_{ij}=0$. In view that $w_{ij}dx^{i}dx^{j}$ is invariant under variable transformation, we consider the diagonal case,
i.e. $h_{ij}=0,i\neq j$, since at any point on the hypersurface we can diagonalize the matrix  $(h_{ij})$ by variable transformation.

If the rank of the matrix $(h_{ij})$ is greater than $2$, without loss of generality we can assume $h_{11},h_{22},h_{33}\neq 0.$
By (4.21),
\begin{eqnarray}
&&h_{11}w_{22}+h_{22}w_{11}=h_{12}w_{21}+h_{21}w_{12},\\
&&h_{11}w_{33}+h_{33}w_{11}=h_{13}w_{31}+h_{31}w_{13},\nonumber\\
&&h_{22}w_{33}+h_{33}w_{22}=h_{23}w_{32}+h_{32}w_{23}.\nonumber
\end{eqnarray}
Since $h_{ij}=0,i\neq j,$ (4.23) is just a linear system of $w_{11},w_{22},w_{33}$
\begin{eqnarray}
\left(
\begin{matrix}
h_{22}&h_{11}&0\\
h_{33}&0&h_{11}\\
0&h_{33}&h_{22}\\
\end{matrix}
  \right)
  \left(
\begin{matrix}
w_{11}\\
w_{22}\\
w_{33}\\
\end{matrix}
  \right)
=\left(
\begin{matrix}
0\\
0\\
0\\
\end{matrix}
  \right).
\end{eqnarray}
The coefficient matrix in (4.24) is invertible, hence $w_{11}=w_{22}=w_{33}$. For other $w_{ij}$, by (4.21)
\begin{eqnarray}
h_{11}w_{ij}+h_{ij}w_{11}=h_{1i}w_{j1}+h_{1j}w_{i1},
\end{eqnarray}
since $i\neq 1, j\neq 1$ and $w_{11}=0$, $h_{11}w_{ij}=0$.

As to the part of global rigidity, without loss of generality we assume the block  $H_3=(h_{ij})_{3\times 3}$ is full rank, then its adjoint matrix $H_{3}^{*}$ is full rank too. By Gauss equation, every element in $H_{3}^{*}$ is an entry of Riemannian curvature tensor which is totally determined by metric. Therefore  $H_{3}^{*}$ is intrinsic and we can recover $H_3$ from $H_{3}^{*}.$ $H^3$ is intrinsic too, and as we proceed in the part of infinitesimal rigidity the $H= (h_{ij})_{n\times n}$ is intrinsic too.
\end{proof}
In the proof of Theorem \ref{DR}, we just deal with the algebraic equations, Gauss equations or its linearized equations, so we can say Theorem \ref{DR} is algebraic.

\begin{ack}
The authors wish to thank  Professor Pengfei Guan and Professor Zhizhang Wang for their valuable suggestions and comments.  Part of the content also comes from Professor Wang's contribution. The first author wishes to thank China Scholarship Council for its financial support. The first author also would  like to thank McGill University for their hospitality.
\end{ack}

\end{document}